\documentclass[A4]{article}
\usepackage[hidelinks]{hyperref}
\usepackage{mathrsfs}
\usepackage{amsfonts}
\usepackage{latexsym}
\usepackage{epsfig}
\usepackage{indentfirst}
\graphicspath{{figures/}}
\usepackage{amsmath}
\usepackage{amsthm}
\usepackage{graphicx}
\usepackage{amssymb}
\usepackage{amscd}
\usepackage{latexsym}
\usepackage{subfigure}
\usepackage{epstopdf}
\usepackage{color}
\DeclareGraphicsExtensions{.eps}
\input xy
\xyoption{all}

\theoremstyle{definition}
\newtheorem{thm}{\textbf{Theorem} }[section]

\newtheorem{lem}[thm]{\textbf{Lemma}}

\newtheorem{cor}[thm]{\textbf{Corollary}}
\newtheorem{prop}[thm]{\textbf{Proposition}}
\newtheorem*{rmk}{\textbf{Remark}}
\newtheorem{defn}[thm]{\textbf{Definition}}

\numberwithin{equation}{section} \makeatletter
\renewenvironment{proof}[1][\proofname]{\par
    \pushQED{\qed}%
    \normalfont \topsep6\p@\@plus6\p@ \labelsep1em\relax
    \trivlist
    \item[\hskip\labelsep
        \bfseries #1]\ignorespaces
}{%
    \popQED\endtrivlist\@endpefalse
} \makeatother

\title{\bfseries \large{ON THE UPSILON INVARIANT OF CABLE KNOTS}}
\author{ \normalsize{Wenzhao Chen}\footnote{chenwenz@math.msu.edu}}
\date{}
\begin{document}
\maketitle
\parindent = 18pt

\begin{abstract}
In this paper, we study the behavior of $\Upsilon_K(t)$ under the cabling operation, where $\Upsilon_K(t)$ is the knot concordance invariant defined by Ozsv\'{a}th, Stipsicz, and Szab\'{o}, associated to a knot $K\subset S^3$. The main result is an inequality relating $\Upsilon_K(t)$ and $\Upsilon_{K_{p,q}}(t)$, which generalizes the inequalities of Hedden \cite{Hed09} and Van Cott \cite{VC10} on the Ozsv\'{a}th-Szab\'{o} $\tau$-invariant. As applications, we give a computation of $\Upsilon_{(T_{2,-3})_{2,2n+1}}(t)$ for $n\geq 8$, and we also show that the set of iterated $(p,1)$-cables of $Wh^{+}(T_{2,3})$ for any $p\geq 2$ span an infinite-rank summand of topologically slice knots. 
\end{abstract}

\section{Introduction}
The complete knot Floer chain complex $CFK^{\infty}(K)$ is a bifiltered, Maslov graded chain complex associated to a knot $K\subset S^3$, introduced by Ozsv\'{a}th and Szab\'{o} \cite{OS04c}, and independently by Rasmussen \cite{Ras03}. \textit{A priori}, the bifiltered chain homotopy type of $CFK^{\infty}(K)$ is an isotopy invariant of the knot $K$. By exploiting the TQFT-like aspects of the theory, however, it turns out that $CFK^{\infty}(K)$ also contains a lot of interesting information about the concordance class of $K$, see \cite{Hom15} for a survey. One typical example is the Ozsv\'{a}th-Szab\'{o} $\tau$-invariant, which came to stage relatively early and attracted a lot of attention \cite{OS03}. Roughly speaking, $\tau(K)$ is a concordance homomorphism that takes its values in $\mathbb{Z}$ and is defined by examining $\widehat{CFK}(K)$, which is a small portion of $CFK^{\infty}(K)$. Moreover, $\tau$ provides a lower bound to the smooth four-genus of a knot i.e. $|\tau(K)|\leq g_4(K)$. Among many applications, Oszv\'{a}th and Szab\'{o} showed that $\tau$ can be used to resolve a conjecture of Milnor first proven by Kronheimer and Mrowka using gauge theory \cite{KM93}, that $g_4(T_{p,q})=\frac{(p-1)(q-1)}{2}$, where $T_{p,q}$ is the $(p,q)$-torus knot. 

Recently, by using more information from $CFK^{\infty}(K)$, Ozsv\'{a}th, Stipsicz, and Szab\'{o} introduced a more powerful concordance invariant that generalizes $\tau$ \cite{OSS}. This invariant takes the form of a homomorphism from the smooth knot concordance group $\mathcal{C}$ to the group of piecewise linear functions on $[0,2]$. Thus, for every knot $K\subset S^3$, they associate a piecewise linear function $\Upsilon_K(t)$, where $t\in[0,2]$ that depends only on the concordance type of $K$.

Besides being a concordance homomorphism, $\Upsilon_K(t)$ also enjoys many other nice properties, some of which we list below.
\begin{itemize}
\item[(1)] ({Symmetry}) $\Upsilon_K(t)=\Upsilon_K(2-t),$
\item[(2)] (4-genus bound) $|\Upsilon_K(t)|\leq tg_4(K)$ for $0\leq t\leq 1$,
\item[(3)] (Recovers $\tau$) The slope of $\Upsilon_K(t)$ at $t=0$ is $-\tau(K)$.
\end{itemize}

In this paper we study how $\Upsilon$ behaves under the cabling operation. Such results for $\tau$ can be found in \cite{Hed05a,Hed05b,Hed09,Hom14,Pet13,VC10}, among which we restate two results. The first one is due to Hedden, obtained by carefully comparing the knot Floer chain complex of a knot and that of its cable.
\begin{thm}(\cite{Hed09})\label{Hed}Let $K\subset S^3$ be a knot, and $p>0$, $n \in \mathbb{Z}$. Then 
\begin{displaymath}
p\tau(K)+\frac{pn(p-1)}{2}\leq \tau(K_{p,pn+1})\leq p\tau(K)+\frac{(pn+2)(p-1)}{2},
\end{displaymath}
\end{thm}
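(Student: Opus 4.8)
The plan is to pin down $\tau(K_{p,pn+1})$ in the two extreme twisting regimes and then interpolate in $n$ by a crossing-change inequality, so that the lower and upper edges of the asserted window propagate to every $n$. Throughout write $a_n=\tau(K_{p,pn+1})$, $L_n=p\tau(K)+\frac{pn(p-1)}{2}$ and $U_n=p\tau(K)+\frac{(pn+2)(p-1)}{2}$, so the claim is $L_n\le a_n\le U_n$; note the window $U_n-L_n=p-1$ is independent of $n$. Since $\gcd(p,pn+1)=1$ the cable is always a knot, and recall that $\tau(K')$ is the least Alexander filtration level at which the inclusion of the sublevel complex into $\widehat{CF}(S^3)$ surjects on homology.

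First I would record the step inequality. Passing from $K_{p,pn+1}$ to $K_{p,p(n+1)+1}$ adds one positive full twist on the $p$ parallel strands, and a full twist on $p$ strands becomes the trivial braid after changing one crossing between each pair of strands, i.e.\ after $\binom{p}{2}=\frac{p(p-1)}{2}$ crossing changes. Since changing a positive crossing to a negative one drops $\tau$ by $0$ or $1$, this gives
\begin{displaymath}
0\le a_{n+1}-a_n\le \frac{p(p-1)}{2}\qquad\text{for every }n\in\mathbb{Z}.
\end{displaymath}
Only the upper estimate $a_{n+1}-a_n\le\frac{p(p-1)}{2}$ will actually be used below.

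The heart of the argument is the two anchors. For $n\gg 0$ I would build a doubly-pointed Heegaard diagram for $K_{p,pn+1}$ adapted to one for $K$, localizing all the twisting in a winding region; the large-winding-number analysis then identifies the relevant part of the filtered complex $\widehat{CFK}(S^3,K_{p,pn+1})$ with data assembled from $CFK^\infty(K)$ via the large-surgery subquotient complexes, and reading off the minimal surjecting filtration level yields the exact value $a_n=L_n$ for all sufficiently large $n$ (this is the computational content of \cite{Hed09}). Running the same analysis with large negative twisting localizes a negatively twisted winding region and lands instead on the upper edge, giving $a_n=U_n$ for all sufficiently negative $n$; as a check, for $K$ the unknot these reproduce $\tau(T_{p,pn+1})=\frac{pn(p-1)}{2}$ when $pn+1>0$ and $\tau(T_{p,pn+1})=\frac{(pn+2)(p-1)}{2}$ when $pn+1<0$. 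Finally I telescope the step inequality against the anchors: for any $n$, choosing $N\gg 0$ gives $a_n\ge a_N-(N-n)\frac{p(p-1)}{2}=L_N-(N-n)\frac{p(p-1)}{2}=L_n$, and choosing $M\ll 0$ gives $a_n\le a_M+(n-M)\frac{p(p-1)}{2}=U_M+(n-M)\frac{p(p-1)}{2}=U_n$, which is exactly the theorem.

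The main obstacle is the anchor computation: making the identification of $\widehat{CFK}(S^3,K_{p,pn+1})$ with the companion's complex precise enough — with the correct Maslov and Alexander gradings and the correct $\mathrm{Spin}^{c}$ bookkeeping across the $p$ strands — to extract the \emph{exact} value of $\tau$ in the stable regimes rather than a one-sided estimate, and to control the winding-region stabilization uniformly as the twisting grows. This is where the genuinely Floer-theoretic input of \cite{Hed09} enters; the remaining interpolation in $n$ is then purely formal.
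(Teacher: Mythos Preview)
Your outline is essentially Hedden's own strategy in \cite{Hed09} --- large-$|n|$ Heegaard-diagram analysis followed by crossing-change interpolation --- and the paper itself summarizes Hedden's proof in exactly those terms. There is, however, a genuine error in your anchor step: you assert that the stable analysis pins down the \emph{exact} value $a_n=L_n$ for $n\gg0$ and $a_n=U_n$ for $n\ll0$, and you attribute this to \cite{Hed09}. That is false in general. For instance, if $K$ is the left-handed trefoil then (as Hom later showed) $\tau(K_{p,pn+1})=U_n$ for every $n$, so $a_n\neq L_n$ even for arbitrarily large positive $n$. What the large-$|n|$ analysis in \cite{Hed09} actually delivers is the two-sided inequality $L_n\le a_n\le U_n$; the precise value inside that window depends on finer data (ultimately $\epsilon(K)$) that the filtered-complex comparison alone does not see. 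The repair is easy and your telescoping already accommodates it: you only need $a_N\ge L_N$ at some large $N$ and $a_M\le U_M$ at some very negative $M$, and both follow from the inequality version of the anchor.

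For comparison, the present paper does not reprove this $\tau$ statement directly --- it is quoted from \cite{Hed09} --- but its proof of the $\Upsilon$ analogue in Section~3.1 takes a different route even for the $(p,pn+1)$ case. Rather than anchoring at large $|n|$ and interpolating via crossing changes, it extends the Alexander-grading comparison to \emph{all} generators of the triply-pointed diagram (Proposition~\ref{comparison}), obtaining the filtration containments of Lemma~3.6 and hence the inequality for every $n$ at once, with no crossing-change step. The paper explains why this broadening is forced for $\Upsilon$: generators of low Alexander grading, invisible to $\tau$, can contribute to $\Upsilon_K(t)$ for $t>0$, so restricting attention to the top of the filtration at large $|n|$ no longer suffices.
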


Later Van Cott used the genus bound and homomorphism property satisfied by $\tau$, together with nice constructions of cobordism between cable knots to extend the above inequality to $(p,q)$-cables. 
\begin{thm}(\cite{VC10})\label{VC}
Let $K\subset S^3$ be a knot, and  $(p,q)$ be a pair of relatively prime numbers such that $p>0$. Then 
\begin{displaymath}
p\tau(K)+\frac{(p-1)(q-1)}{2}\leq \tau(K_{p,q})\leq p\tau(K)+\frac{(p-1)(q+1)}{2},
\end{displaymath}
\end{thm}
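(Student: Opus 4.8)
The plan is to reduce the theorem to two ingredients and then interpolate from Hedden's Theorem \ref{Hed}. The first ingredient is a Lipschitz-type estimate for $\tau$ under cobordism; the second is a single cobordism construction comparing cables with different parameters. The interpolation works by bracketing $q$ between the two nearest integers congruent to $1$ modulo $p$, where Hedden's inequality is available, and then paying the (small) cost of moving from those anchor values to $q$ along the cobordism. This mirrors the strategy the excerpt attributes to Van Cott, namely using only the four-genus bound, the homomorphism property, and explicit cobordisms between cables.

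I would first record the Lipschitz property. If two knots $K_0,K_1$ are connected by a smooth genus-$g$ cobordism in $S^3\times[0,1]$, then $K_0\#(-K_1)$ bounds a smoothly embedded genus-$g$ surface in $B^4$, where $-K_1$ denotes the reverse of the mirror image of $K_1$ (the concordance inverse). Hence $g_4(K_0\#(-K_1))\le g$, and combining the four-genus bound $|\tau|\le g_4$ with the fact that $\tau$ is a concordance homomorphism gives $|\tau(K_0)-\tau(K_1)|=|\tau(K_0\#(-K_1))|\le g$. This step is routine and uses only the two properties of $\tau$ already cited.

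The geometric heart is the cobordism. For relatively prime pairs $(p,q),(p,q')$ with $0<q<q'$, I claim $K_{p,q}$ and $K_{p,q'}$ are connected by a cobordism of genus at most $\tfrac{(p-1)(q'-q)}{2}$. To build it I would use the satellite description: $K_{p,q}$ is the satellite of $K$ with pattern the closure, inside a solid torus $V$, of the $p$-strand braid $(\sigma_1\cdots\sigma_{p-1})^{q}$, and likewise for $q'$. Since $(\sigma_1\cdots\sigma_{p-1})^{q'}=(\sigma_1\cdots\sigma_{p-1})^{q}(\sigma_1\cdots\sigma_{p-1})^{q'-q}$, the second pattern differs from the first by $(p-1)(q'-q)$ additional braid generators. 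Smoothing each of these crossings by an oriented saddle move, performed inside $V$, produces a cobordism between the two patterns in $V\times[0,1]$; coprimality of $p$ with $q$ and $q'$ guarantees both ends are knots, and I would arrange the saddles so that the resulting surface is connected with exactly two boundary circles. Each saddle drops the Euler characteristic by one, so the surface has $\chi=-(p-1)(q'-q)$ and hence genus $\tfrac{(p-1)(q'-q)}{2}$. Embedding $V\times[0,1]$ in $S^3\times[0,1]$ as a neighborhood of $K\times[0,1]$ carries this to a cobordism of the same genus between $K_{p,q}$ and $K_{p,q'}$. I expect this to be the main obstacle: one must verify that the chosen saddle moves keep the cobordism connected and introduce no superfluous genus, so that its genus is indeed bounded by $\tfrac{(p-1)(q'-q)}{2}$. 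This is the only genuinely geometric step, and it is where the argument must be carried out with care.

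With these two ingredients in hand, the inequalities follow by bracketing. Fix $(p,q)$ with $p>0$ and $\gcd(p,q)=1$, put $n=\lfloor (q-1)/p\rfloor$, and set $q_-=pn+1$ and $q_+=p(n+1)+1$, so that $q_-\le q< q_+$ and both $q_\pm\equiv 1\pmod p$, making Theorem \ref{Hed} applicable at $q_-$ and $q_+$. Applying the Lipschitz estimate together with the cobordism to the pair $(q_-,q)$ gives $\tau(K_{p,q})\le \tau(K_{p,q_-})+\tfrac{(p-1)(q-q_-)}{2}$, and Hedden's upper bound $\tau(K_{p,q_-})\le p\tau(K)+\tfrac{(p-1)(q_-+1)}{2}$ then yields $\tau(K_{p,q})\le p\tau(K)+\tfrac{(p-1)(q+1)}{2}$. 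Symmetrically, applying the estimate to the pair $(q,q_+)$ gives $\tau(K_{p,q})\ge \tau(K_{p,q_+})-\tfrac{(p-1)(q_+-q)}{2}$, and Hedden's lower bound $\tau(K_{p,q_+})\ge p\tau(K)+\tfrac{(p-1)(q_+-1)}{2}$ yields $\tau(K_{p,q})\ge p\tau(K)+\tfrac{(p-1)(q-1)}{2}$. These are exactly the two desired inequalities, and the case $p=1$ is trivial since both sides collapse, completing the proof.
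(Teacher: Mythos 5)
Your proposal is correct and follows essentially the same route as the paper, which attributes the result to Van Cott and reproduces the identical scheme in Section 3.2 for $\Upsilon$: a genus-$\frac{(p-1)(q'-q)}{2}$ cobordism between $K_{p,q}$ and $K_{p,q'}$, the Lipschitz bound from $|\tau|\leq g_4$ plus the homomorphism property, and bracketing $q$ between parameters congruent to $1$ modulo $p$ where Theorem \ref{Hed} applies. The only cosmetic point is that you state the cobordism for $0<q<q'$ while the bracketing values may be negative; the same oriented-saddle construction (passing through the trivial braid when the signs differ) covers that case with the identical genus count.
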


In view of the success in understanding the effect of knot cabling on the $\tau$-invariant, it is natural to wonder what happens to $\Upsilon$. In this paper we show that a portion of $\Upsilon$ behaves very similarly to $\tau$. Indeed, adapting the strategies of Hedden and Van Cott on studying $\tau$ to the context of $\Upsilon$, we can prove the following result:
\begin{thm}  \label{main}
Let $K\subset S^3$ be a knot, and  $(p,q)$ be a pair of relatively prime numbers such that $p>0$. Then
\begin{displaymath}
\Upsilon_K(pt)-\frac{(p-1)(q+1)t}{2}\leq \Upsilon_{K_{p,q}}(t)\leq \Upsilon_K(pt)-\frac{(p-1)(q-1)t}{2},
\end{displaymath}
when $0 \leq t\leq \frac{2}{p}$.
\end{thm}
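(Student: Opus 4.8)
The plan is to run the two-stage argument suggested by Theorems \ref{Hed} and \ref{VC}: first prove the inequality for the special cables $K_{p,pn+1}$ by a Hedden-style analysis of the filtered complex, and then propagate it to all coprime $(p,q)$ by a Van Cott-style cobordism argument that uses only the formal properties of $\Upsilon$. We may assume $p\ge 2$; the case $p=1$ is trivial, since $K_{1,q}=K$ makes both bounds collapse to $\Upsilon_K(t)$, and then $t\le 2/p$ becomes the full range $[0,2]$. For $p\ge 2$ the hypothesis $t\le 2/p$ in particular forces $t\le 1$, which will be needed to invoke the genus bound.

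\textbf{Step 1 (Hedden analogue).} I would first establish, for all $n$ with $|n|$ sufficiently large, the statement for $q=pn+1$, namely
\[
\Upsilon_K(pt)-\frac{(p-1)(pn+2)t}{2}\;\le\;\Upsilon_{K_{p,pn+1}}(t)\;\le\;\Upsilon_K(pt)-\frac{(p-1)pn\,t}{2}.
\]
Following Hedden, the point is that for large $|n|$ the portion of the bifiltered homotopy type of $CFK^{\infty}(K_{p,pn+1})$ that is relevant to $\Upsilon$ is modeled on $p$ copies of $CFK^{\infty}(K)$ in which the Alexander filtration is dilated by a factor of $p$ and translated by an amount dictated by the framing parameter $pn+1$. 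Computing $\Upsilon$ as the usual min--max over the one-parameter filtration $\mathcal{F}_t$, the dilation of the Alexander grading by $p$ is precisely what replaces the parameter $t$ by $pt$; this is the source both of the term $\Upsilon_K(pt)$ and of the restriction $t\le 2/p$, which guarantees $pt\in[0,2]$ so that $\Upsilon_K(pt)$ is defined. The framing translation then contributes the affine terms $-\tfrac{(p-1)pn\,t}{2}$ and $-\tfrac{(p-1)(pn+2)t}{2}$, and the residual ambiguity of size $(p-1)$ between the two copies' contributions is exactly the width of the interval.

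\textbf{Step 2 (Van Cott extension).} Here I would use two formal inputs. First, since $\Upsilon$ is a concordance homomorphism satisfying $|\Upsilon_J(t)|\le t\,g_4(J)$ for $t\in[0,1]$, any genus-$g$ cobordism between knots $J_1,J_2$ gives $|\Upsilon_{J_1}(t)-\Upsilon_{J_2}(t)|=|\Upsilon_{J_1\#-J_2}(t)|\le t\,g$ for $t\in[0,1]$. Second, for coprime pairs $(p,q)$ and $(p,q')$ there is a cobordism between $K_{p,q}$ and $K_{p,q'}$ of genus $\tfrac{(p-1)|q-q'|}{2}$, obtained by inserting or resolving the twist region of the cable pattern (this is the cabling cobordism used by Van Cott; its genus is sharp, as the torus-knot case $K=U$ shows). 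Combining these, for $t\in[0,2/p]$ we get $|\Upsilon_{K_{p,q}}(t)-\Upsilon_{K_{p,q'}}(t)|\le \tfrac{(p-1)|q-q'|}{2}\,t$. Given an arbitrary coprime $q$, for the upper bound I choose $q'=pm+1\ge q$ and apply the Step 1 upper bound at $q'$; writing $pm=q'-1$, the two affine error terms cancel identically and yield $\Upsilon_{K_{p,q}}(t)\le \Upsilon_K(pt)-\tfrac{(p-1)(q-1)t}{2}$. Symmetrically, choosing $q'=pm+1\le q$ and applying the Step 1 lower bound, with $pm+2=q'+1$, gives $\Upsilon_{K_{p,q}}(t)\ge \Upsilon_K(pt)-\tfrac{(p-1)(q+1)t}{2}$. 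Since integers of the form $pm+1$ are spaced $p$ apart, such auxiliary $q'$ exist on either side of any $q$ with $|m|$ as large as we like, so it suffices to have Step 1 for all large $|n|$; this also covers the small-$n$ cases of $q=pn+1$ themselves.

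\textbf{Main obstacle.} I expect Step 2 to be essentially formal once the cabling cobordism is in hand: the key feature is that the affine corrections telescope exactly, so no slack is introduced. The real work is Step 1, which demands a genuine identification of the $\mathcal{F}_t$-filtered homotopy type of the cable complex for large $|n|$, together with verification of the $pt$-dilation and the precise framing shift. In addition, some care is required to run the structural computation for $n\to-\infty$ as well as $n\to+\infty$, because the lower bound for small or negative $q$ in Step 2 is obtained by applying Step 1 at very negative auxiliary values $q'=pm+1$.
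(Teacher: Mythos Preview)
Your two–step strategy (Hedden for $q=pn+1$, then Van Cott for general $q$) is exactly the route the paper takes, and your Step~2 is essentially identical to the paper's Section~3.2: the paper simply quotes Van Cott's inequality $-(p-1)\le h(n)-h(r)\le 0$ for $\theta=-\Upsilon_K(t)/t$ and combines it with the Step~1 bounds, which is the same telescoping you describe. (One small caution: the cabling cobordisms Van Cott uses are not literally a genus-$\tfrac{(p-1)|q-q'|}{2}$ surface between $K_{p,q}$ and $K_{p,q'}$; they pass through torus knots and use $\theta(T_{p,q})=\tfrac{(p-1)(q-1)}{2}$. But the one-sided inequality you actually invoke is precisely what her argument yields, so your deductions are valid.)

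The place where your sketch diverges from the paper is Step~1. The paper does \emph{not} pass to a large-$|n|$ stable model. Instead, it uses a single triply-pointed Heegaard diagram $H(p,n)$ encoding both $K$ and $K_{p,pn+1}$, and proves (Proposition~3.2) an exact comparison $A'(\mathbf{x})=pA(\mathbf{x})+\tfrac{pn(p-1)}{2}+l$, $l\in\{0,\dots,p-1\}$, valid for \emph{every} $n$ and for \emph{every} generator $\mathbf{x}$. This immediately gives the filtration containment $(C,\mathcal F'_t)_{s+\frac{pn(p-1)t}{4}}\subset (C,\mathcal F_{pt})_s\subset (C,\mathcal F'_t)_{s+\frac{(pn+2)(p-1)t}{4}}$ and hence the Step~1 inequality. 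The point you should not underestimate is that Hedden's original analysis (used for $\tau$) only tracks the ``outer'' generators near the top of the Alexander filtration; this suffices for $\tau$ but not for $\Upsilon$, which sees the whole complex. The paper's genuine technical contribution is extending the grading comparison to all generator types (the even/odd ``$l$-intersection'' partition), and once that is done there is no need for any large-$|n|$ hypothesis. Your ``$p$ copies of $CFK^\infty(K)$ with Alexander dilated by $p$'' is the right heuristic, but the mechanism that makes it precise is this grading formula, not an asymptotic splitting.
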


Note that by differentiating the above inequality at $t=0$, we recover Theorem \ref{VC}.  One also easily sees this inequality is sharp by examining the case when $K$ is the unknot: when $q>0$, then the upper bound is achieved, and when $q<0$ the lower bound is achieved. However, when $p>2$ the behavior of $\Upsilon_{K_{p,q}}(t)$ for $t\in[\frac{2}{p},2-\frac{2}{p}]$ is still unknown to the author.\\

Theorem \ref{main} can often be used to determine the $\Upsilon$ function of cables with limited knowledge of their complete knot Floer chain complexes.  For an example, we show how our theorem can be used to deduce $\Upsilon_{(T_{2,-3})_{2,2n+1}}(t)$ for $n\geq 8$; these examples are, on the face of it, rather difficult to compute, since none of them is an $L$-space knot.  Despite this, armed only with Theorem \ref{main} and the knot Floer homology groups of the knots in this family, we are able to obtain complete knowledge of $\Upsilon$.  

Perhaps more striking, however, Theorem \ref{main} can be used to easily show that certain subsets of the smooth concordance group freely generate infinite-rank summands.  To this end, let  $D=Wh^{+}(T_{2,3})$ be the untwisted positive whitehead double of the trefoil knot and let $J_n=((D_{p,1})...)_{p,1}$ denote the $n$-fold iterated $(p,1)$-cable of $D$ for some fixed $p>1$ and some positive integer $n$. Theorem \ref{main}, together with general properties of $\Upsilon$, yield the following
\begin{cor}\label{cor}
The family of knots $J_n$ for $n=1,2,3,...$ are linearly independent in $\mathcal{C}$ and span an infinite-rank summand consisting of topologically slice knots.
\end{cor}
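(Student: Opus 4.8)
The plan is to combine three ingredients: Freedman's theorem for topological sliceness, the iterate of Theorem \ref{main} to pin down $\Upsilon_{J_n}$ near $t=0$, and a family of integer-valued homomorphisms built from the singularities of $\Upsilon$ to separate the $J_n$ inside $\mathcal{C}$. I would dispose of topological sliceness first: the untwisted Whitehead double has $\Delta_D(t)=1$, and since $T_{p,1}$ is unknotted one has $\Delta_{K_{p,1}}(t)=\Delta_K(t^p)$, so every $J_n$ has trivial Alexander polynomial and is topologically slice by Freedman.

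Next I would set $q=1$ in Theorem \ref{main}, giving $\Upsilon_K(pt)-(p-1)t\le\Upsilon_{K_{p,1}}(t)\le\Upsilon_K(pt)$ on $[0,2/p]$, and chain these inequalities $n$ times to obtain
\begin{displaymath}
\Upsilon_D(p^n t)-(p^n-1)t\ \le\ \Upsilon_{J_n}(t)\ \le\ \Upsilon_D(p^n t),\qquad t\in[0,2/p^n].
\end{displaymath}
Using the known value $\Upsilon_D(t)=-t$ on $[0,1]$ (the relevant part of $CFK^\infty(D)$ is the trefoil staircase, so $\Upsilon_D=\Upsilon_{T_{2,3}}$) and the genus computation $g_4(J_n)\le g_3(J_n)=p^n$, property (2) forces $\Upsilon_{J_n}(t)\ge -p^n t$, which matches the upper bound $\Upsilon_D(p^n t)=-p^n t$ for $t\le 1/p^n$. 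Hence $\Upsilon_{J_n}(t)=-p^n t$ on $[0,1/p^n]$, so $\Upsilon_{J_n}$ is linear there and its first singularity $t_n$ satisfies $t_n\ge 1/p^n$; remaining above the lower bound $t-2$ on $[1/p^n,2/p^n]$ forces $t_n\le 2/(p^n+1)$. A short computation using $p\ge 2$ shows $1/p^m>2/(p^n+1)$ whenever $m<n$, so the singularities are scale-separated: $t_1>t_2>\cdots\to 0$.

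For linear independence I would use that the slopes of $\Upsilon_K$ are integers and that $\Upsilon$ is a concordance homomorphism, so for each $s$ the slope-jump $\psi_s(K)$ (right slope minus left slope of $\Upsilon_K$ at $s$) is a homomorphism $\mathcal{C}\to\mathbb{Z}$; because each $\Upsilon_K$ has only finitely many singularities, $\Psi=(\psi_{t_n})_n$ lands in $\bigoplus_n\mathbb{Z}$. By construction $\psi_{t_n}(J_m)=0$ for $n>m$ (at the finer scale $t_n<t_m$ the knot $J_m$ is still linear), while $d_n:=\psi_{t_n}(J_n)\ne 0$, so $\Psi(J_m)=d_m e_m+\sum_{n<m}\ast\, e_n$. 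Reading off the top coefficient of any finite relation then yields linear independence. To upgrade this to a summand it suffices to know $d_n=\pm 1$: in that case $\{\Psi(J_n)\}$ is upper-unitriangular in the basis $\{e_n\}$, hence a $\mathbb{Z}$-basis of $\bigoplus_n\mathbb{Z}$, so $\Psi$ restricts to an isomorphism on $\langle J_n\rangle$ and $(\Psi|_{\langle J_n\rangle})^{-1}\circ\Psi$ is a retraction of $\mathcal{C}$ onto $\langle J_n\rangle$.

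The main obstacle is precisely this last point. The two-sided bound of Theorem \ref{main} controls $\Upsilon_{J_n}$ only up to its first singularity and permits the jump $d_n$ to be any integer in $(0,2p^n]$, so it does not by itself force $d_n=\pm1$. I expect to close this gap by computing $\Upsilon_{J_n}$ exactly in a neighborhood of $t_n$ — either by analyzing the cabling operation on $CFK^\infty(J_{n-1})$ directly, or by comparing with the iterated torus knots obtained by replacing the companion $D$ by $T_{2,3}$ (which has the same $\Upsilon$) — in order to show the first jump is a single unit step from slope $-p^n$ to slope $-p^n+1$. Once $d_n=\pm1$ is in hand, the retraction constructed above completes the proof that the $J_n$ span an infinite-rank summand of $\mathcal{C}$.
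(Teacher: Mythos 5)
Your outline reproduces the paper's strategy almost exactly: the same $q=1$ squeeze from Theorem \ref{main}, iterated to give $\Upsilon_D(p^nt)-(p^n-1)t\le\Upsilon_{J_n}(t)\le\Upsilon_D(p^nt)$, the same trapping of the first singularity $t_n$ in $[1/p^n,\,2/(p^n+1)]$ and the resulting scale separation, and the same idea of separating the $J_n$ by slope-jump homomorphisms. Two points need attention. First, a normalization issue: with your raw jump $\psi_s(K)=\lim_{t\searrow s}\Upsilon'_K(t)-\lim_{t\nearrow s}\Upsilon'_K(t)$, the first jump of $J_n$ is \emph{never} $\pm1$ --- it equals $2/t_n$, which is roughly $p^n$ --- so the unitriangularity you want must be phrased for the normalized invariant $\frac{s}{2}\Delta\Upsilon'_K(s)$ (the integer-valued homomorphism of \cite{OSS}, and the one the paper uses). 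The correct target is $\frac{t_n}{2}\Delta\Upsilon'_{J_n}(t_n)=1$.

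Second, and more substantively, the step you flag as open is a genuine gap, and neither of your proposed repairs works as stated: replacing the companion $D$ by $T_{2,3}$ is not licensed by ``same $\Upsilon$'' (equality of $\Upsilon$ is not known to persist under cabling --- controlling $\Upsilon$ of cables is the entire point of Theorem \ref{main}), and computing $CFK^\infty(J_{n-1})$ under cabling directly is precisely the hard calculation the paper is built to avoid. The paper closes the gap with a soft structural argument (Lemma 4.3). By Theorem 7.1 of \cite{Liv14}, the first singularity $\xi$ of $\Upsilon_K$ is witnessed by a line of slope $1-\frac{2}{\xi}$ through two Maslov-grading-$0$ lattice points of $CFK^\infty(K)$, one of which must be $(0,\tau(K))$ because $\xi$ is the \emph{first} singularity; if $(i',j')$ is the other, then $\xi=\frac{2i'}{i'-j'+\tau(K)}$ and the normalized jump equals $|i'|$. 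The breadth bound $|i'-j'|\le g_3(K)$ then shows that $0<\xi<\frac{4}{g_3(K)+\tau(K)}$ forces $|i'|=1$. Since $g_3(J_n)=\tau(J_n)=p^n$ and your own estimate gives $t_n\le\frac{2}{p^n+1}<\frac{2}{p^n}=\frac{4}{g_3(J_n)+\tau(J_n)}$, this yields the unit jump with no computation of $\Upsilon_{J_n}$ near $t_n$, and your unitriangular/retraction argument then completes the proof as in the paper.
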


To the best of the author's knowledge, Corollary \ref{cor} provides the  first satellite operator on the smooth concordance group of topologically slice knots whose iterates (for a fixed knot) are known to be independent; moreover, in this case they are a summand.  Note $J_n$ has trivial Alexander polynomial.  The first known example of infinite-rank summand of knots with trivial Alexander polynomial is generated by $D_{n,1}$ for $n\in \mathbb{Z}^+$, due to Kim and Park \cite{KP16}.   Their example, however, like the families of topologically slice knots studied by \cite{OSS}, involved rather non-trivial calculuatons of $\Upsilon$ (for instance, those of \cite{OSS} involved rather technical caclulations with the bordered Floer invariants).    The utility of Theorem \ref{main} is highlighted by the ease with which the above family is handled. We also refer the interested reader to \cite{FPR16} for a host of other applications of Theorem \ref{main} to the study of the knot concordance group.\\

To conclude the introduction, it is worth mentioning that Hom achieved a complete understanding of the behavior of $\tau$ under cabling by introducing the $\epsilon$-invariant \cite{Hom14}. In particular, $\tau(K_{p,q})$ is always one of the two bounds appearing in Theorem \ref{VC}, depending on the value of $\epsilon(K)$. However, in the context of $\Upsilon$ the story is not true, even for $L$-space knots whose knot Floer chain complexes are relatively simple. For instance, $\Upsilon_{(T_{2,3})_{n,2n-1}}(t)$ is computed in \cite{OSS} but it does not equal either bound appearing in Theorem \ref{main}. This suggests the behavior of $\Upsilon$ under cabling is more complicated. For example, it would be interesting if one can find suitable auxiliary invariants serving a similar role as $\epsilon$-invariant.\\

\textbf{Outline.} The organization of the rest of the paper is as following: in Section 2, we review the definition of $\Upsilon$. In Section 3, we prove our main theorem. In Section 4 we give two applications of Theorem \ref{main}. 

\subsection*{Acknowledgments.} I wish to thank my advisor Matt Hedden, for teaching me with great patience, and lending me his vision through wonderful suggestions that shape this work. I also thank Andrew Donald and Kouki Sato for many nice comments on earlier versions of this paper.

\section{Preliminaries}
We work over $\mathbb{F}=\mathbb{Z}/2\mathbb{Z}$ throughout the entire paper. We also assume that the reader is familiar with the basic setup of knot Floer homology. For more details, see \cite{OS04c,Ras03}.

In this section, we will briefly review the construction of $\Upsilon_K(t)$ of a given knot $K$, setting up some notations at the same time. The original definition of $\Upsilon_K(t)$ is based on a $t$-modified knot Floer chain complex, see \cite{OSS}. Shortly thereafter, Livingston reformulated $\Upsilon_K(t)$ in terms of the complete knot Floer chain complex $CFK^{\infty}(K)$. We find it convenient to work with Livingston's definition, which we recall below.

Denote $CFK^{\infty}(K)$ by $C(K)$ for convenience. Note that $C(K)$ comes with a $\mathbb{Z}\oplus\mathbb{Z}$-filtration, namely the Alexander filtration and the algebraic filtration. Actually, to be more precise, $C(K)$ is only well defined up to bifiltered chain homotopy equivalence, unless we fix some compatible Heegaard diagram for $K$ and some auxiliary data. 

Now for any $t\in[0,2]$, one can define a filtration on $C(K)$ as follows. First, define a real-valued (grading) function on $C(K)$ by $$\mathcal{F}_t=\frac{t}{2}Alex+(1-\frac{t}{2})Alg,$$ which is a convex linear combination of Alexander and algebraic gradings. Associated to this function, one can construct a filtration given by $(C(K), \mathcal{F}_t)_s=(\mathcal{F}_t)^{-1}(-\infty,s]$. It is easy to see that the filtration induced by $\mathcal{F}_t$ is compatible with the differential of $C(K)$, i.e. $\mathcal{F}_t(\partial x)\leq \mathcal{F}_t(x)$, $\forall x\in C(K)$. Let $$\nu(C(K), \mathcal{F}_t)=\min\{s\in\mathbb{R}|H_{0}((C(K), \mathcal{F}_t)_s)\rightarrow H_{0}(C(K))\ \text{is\ nontrivial}\}.$$
Here $H_{0}$ stands for the homology group with Maslov grading $0$. With these preparations, $\Upsilon$ is defined as following.
\begin{defn}
$\Upsilon_K(t)=-2\nu(C(K), \mathcal{F}_t)$.
\end{defn}

It is proven in \cite{Liv14} that the above definition of $\Upsilon_K(t)$  is equivalent to the one given by Ozsv\'{a}th, Stipsicz, and Szab\'{o} in \cite{OSS}.

\section{Proof of the main theorem}
The proof of Theorem \ref{main} will be divided into two parts: in Subsection 3.1 we will prove the inequality for the $(p,pn+1)$ cable of a knot by adapting Hedden's strategy in \cite{Hed05a,Hed09}, and then in Subsection 3.2 we will upgrade the inequality to cover the $(p,q)$-cable of a knot by applying Van Cott's argument in \cite{VC10}. 
\subsection{Upsilon of $(p,pn+1)$-cable}
Following \cite{Hed05a,Hed05b,Hed09}, we will begin with introducing a nice Heegaard diagram which encodes both the original knot $K$ and its cable $K_{p,pn+1}$.

For any knot $K\subset S^3$, there exists a compatible Heegaard diagram $H=(\Sigma, \{\alpha_1,...,\alpha_g\},\{\beta_1,...,\beta_{g-1},\mu\},w,z)$. Moreover, by stabilizing we can assume $\mu$ to be the meridian of the knot $K$ and  that it only intersects $\alpha_g$, and there is a $0$-framed longitude of the knot $K$ on $\Sigma$ which does not intersect $\alpha_g$. From now on, we will always assume that the Heegaard diagram $H$ for $K$ satisfies all these properties.

Let $H=(\Sigma, \{\alpha_1,...,\alpha_g\},\{\beta_1,...,\beta_{g-1},\mu\},w,z)$ be a Heegaard diagram for the knot $K$ as above. By modifying $\mu$ and adding an extra base point $z'$, we can construct a new Heegaard diagram with three base points $H(p,n)=(\Sigma, \{\alpha_1,...,\alpha_g\},\{\beta_1,...,\beta_{g-1},\tilde{\beta}\},w,z,z')$. More precisely, $\tilde{\beta}$ is obtained by winding $\mu$ along an $n$-framed longitude $(p-1)$ times, and the new base point $z'$ is placed at the tip of the winding region such that the arc $\delta'$ connecting $w$ and $z'$ has intersection number $p$ with $\tilde{\beta}$. Note $\tilde{\beta}$ can be deformed to $\mu$ through an isotopy that does not cross the base points $\{w,z\}$. See Figure~\ref{longitude} and Figure~\ref{winding} for an example. The power of $H(p,n)$ lies in the fact that it specifies both $K$ and $K_{p,pn+1}$ at the same time, as pointed out by the following lemma.

\begin{lem}(Lemma 2.2 of \cite{Hed05a})
Let $H(p,n)$ be a Heegaard diagram described as above. Then
\begin{enumerate}
\item Ignoring $z'$, we get a doubly-pointed diagram $H(p,n,w,z)$ which specifies $K$.
\item Ignoring $z$, we get a doubly-pointed diagram $H(p,n,w,z')$ which specifies the cable knot $K_{p,pn+1}$.
\end{enumerate}

\end{lem}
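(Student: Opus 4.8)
The plan is to treat the two parts separately, exploiting the fact that $\tilde\beta$ is geometrically an isotoped and wound copy of the meridian $\mu$. For part (1), I would argue purely by isotopy invariance. Since $\tilde\beta$ is obtained from $\mu$ by an isotopy supported in the winding region that never crosses $w$ or $z$, the doubly-pointed diagram $(\Sigma,\{\alpha_i\},\{\beta_1,\dots,\beta_{g-1},\tilde\beta\},w,z)$ is obtained from $(\Sigma,\{\alpha_i\},\{\beta_1,\dots,\beta_{g-1},\mu\},w,z)$ by an ambient isotopy of an attaching curve disjoint from the basepoints. Such a move is one of the elementary equivalences of doubly-pointed Heegaard diagrams and therefore preserves the associated knot in $S^3$. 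As the latter diagram is exactly $H$ with its basepoints $\{w,z\}$, which specifies $K$ by hypothesis, the diagram $H(p,n,w,z)$ also specifies $K$.

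For part (2) I would first pin down the ambient manifold and then trace the knot. Forgetting all basepoints, the only change from $H$ is the replacement of $\mu$ by the isotopic curve $\tilde\beta$; hence the underlying closed $3$-manifold is unchanged and equals $S^3$. It remains to identify the knot determined by the basepoints $\{w,z'\}$, which is the union $a'\cup b'$, where $a'$ is an arc from $w$ to $z'$ in the complement of the $\alpha$-curves (pushed into the $\alpha$-handlebody) and $b'$ is an arc from $z'$ back to $w$ in the complement of $\{\beta_1,\dots,\beta_{g-1},\tilde\beta\}$ (pushed into the $\beta$-handlebody). I would take $a'=\delta'$, the short arc in the winding region, which meets $\tilde\beta$ in $p$ points and is disjoint from the $\alpha$-curves.

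The heart of the argument is to see that this knot lies on the boundary torus $T=\partial\nu(K)$ of a tubular neighborhood of $K$ and to compute its homology class there. I would arrange the winding region to sit on $T$, with $\mu$ realizing the meridian and the $n$-framed longitude $\lambda_n=\lambda+n\mu$ realizing the curve along which $\mu$ is wound. Because $\delta'$ crosses the $p$ parallel strands of $\tilde\beta$ produced by winding $\mu$ a total of $(p-1)$ times, the arc $b'$ is forced to run $p$ times longitudinally around $K$ while accumulating the twisting dictated by the $n$-framing; tracing it off the winding region and back along the $K$-parallel copy furnished by the $\{w,z\}$-arc of part (1) shows that the resulting curve is isotopic on $T$ to $p\lambda_n+\mu$. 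Rewriting in the $0$-framed basis, $p\lambda_n+\mu=p\lambda+(pn+1)\mu$, which is exactly the homology class of $K_{p,pn+1}$ on $T$, so $H(p,n,w,z')$ specifies $K_{p,pn+1}$.

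The main obstacle is precisely this last bookkeeping step: making rigorous that the traced arc $b'$ wraps $p$ times with the correct number of meridional twists, i.e. translating the combinatorial data (the intersection count $\delta'\cdot\tilde\beta=p$ together with the $(p-1)$ windings along $\lambda_n$) into the homology class $p\lambda+(pn+1)\mu$ on $T$. This is where care is needed to ensure the strands can be isotoped onto $T$ without introducing spurious intersections with $\alpha_g$, and that the framing contribution of the winding is counted correctly. This is the geometric content of Lemma 2.2 of \cite{Hed05a}, whose picture-level verification I would reproduce to complete the proof.
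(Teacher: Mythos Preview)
The paper does not prove this lemma at all; it simply quotes it as Lemma~2.2 of \cite{Hed05a} and moves on. So there is no ``paper's own proof'' to compare against here---the statement is imported wholesale from Hedden's earlier work.

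That said, your sketch is essentially the argument Hedden gives. Part~(1) is exactly right and the paper even records the key fact you use: ``$\tilde{\beta}$ can be deformed to $\mu$ through an isotopy that does not cross the base points $\{w,z\}$.'' For part~(2), your outline---place the winding region on $\partial\nu(K)$, trace the knot determined by $\{w,z'\}$ as living on that torus, and read off the class $p\lambda+(pn+1)\mu$ from the intersection number $\delta'\cdot\tilde\beta=p$ together with the $(p-1)$ windings along the $n$-framed longitude---is the standard picture-level verification. You are honest that the bookkeeping (ensuring the $\beta$-handlebody arc $b'$ can be isotoped onto the torus and that the meridional count comes out to $pn+1$ rather than, say, $pn-1$ or $p(n\pm1)+1$) is where the care goes; in Hedden's original this is done by an explicit local model and a figure, and that is precisely what you would be reproducing. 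There is no gap in your plan beyond filling in that local picture.
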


This implies that the two knot Floer chain complexes $CFK^\infty(H(p,n,w,z))$ and $CFK^\infty(H(p,n,w,z'))$ are closely related. More precisely, by forgetting the Alexander filtrations, both $CFK^\infty(H(p,n,w,z))$ and $CFK^\infty(H(p,n,w,z'))$ are isomorphic to $CF^\infty(H(p,n,w))$. Therefore, in order to get a more transparent correspondence between these two complexes, we will compare the Alexander gradings of the intersection points with respect to the two different base points $z$ and $z'$.

For the sake of a clearer discussion, we fix some notation and terminology to deal with the intersection points. For convenience, we assume $n\geq 0$ through out the discussion and remark that the case when $n<0$ can be handled in a similar way. Note $\tilde{\beta}$ intersects $\alpha_g$ at $2(p-1)n+1$ points, and we label them as $x_0$, ..., $x_{2(p-1)n}$, starting at the out-most layer from left to right, and then the second layer from left to right, and so on. On the other hand, $\tilde{\beta}$ could also intersect other $\alpha$-curves besides $\alpha_g$, and we label these points by $y^{(k)}_{0}$,..., $y^{(k)}_{2(p-1)-1}$. Here $k$ enumerates the intersections of the $n$-framed longitude with $\alpha_i$, $i\neq g$, and the order of this enumeration is irrelevant. The lower index is again ordered following a layer by layer convention, from outside to inside, but we require that \textit{$y^{(k)}_0$ can be connected to $x_{2n}$ by an arc on $\tilde{\beta}$ which neither intersects $\delta$ nor $\delta'$}, the short arcs connecting the base points. See Figure~\ref{winding} for an example. The generators will be partitioned into $p$ classes: all the generators of the form $\{x_{2i},\textbf{a}\}$ or $\{y^{(k)}_{2i},\textbf{b}\}$ will be called \emph{even intersection points} or \emph{$0$-intersection points}, and \emph{odd intersection points} otherwise; odd generators of the form $\{x_{2i+1},\textbf{a}\}$ or $\{y^{(k)}_{2\lceil \frac{i+1}{n}\rceil-1},\textbf{b}\}$ will be called \emph{$(p-\lceil \frac{i+1}{n}\rceil)$-intersection points}. Here $\textbf{a},\textbf{b}$ are $(g-1)$-tuple in $Sym^{g-1}(\Sigma)$. Note that essentially we are classifying odd intersection points into $(p-1)$ classes by the following principle: if its $\tilde{\beta}$-component sits on the $i$-th layer (we count the layers from outside to inside), then it is called a $(p-i)$-intersection point.

\begin{figure}[!ht]
\centering{
\resizebox{110mm}{!}{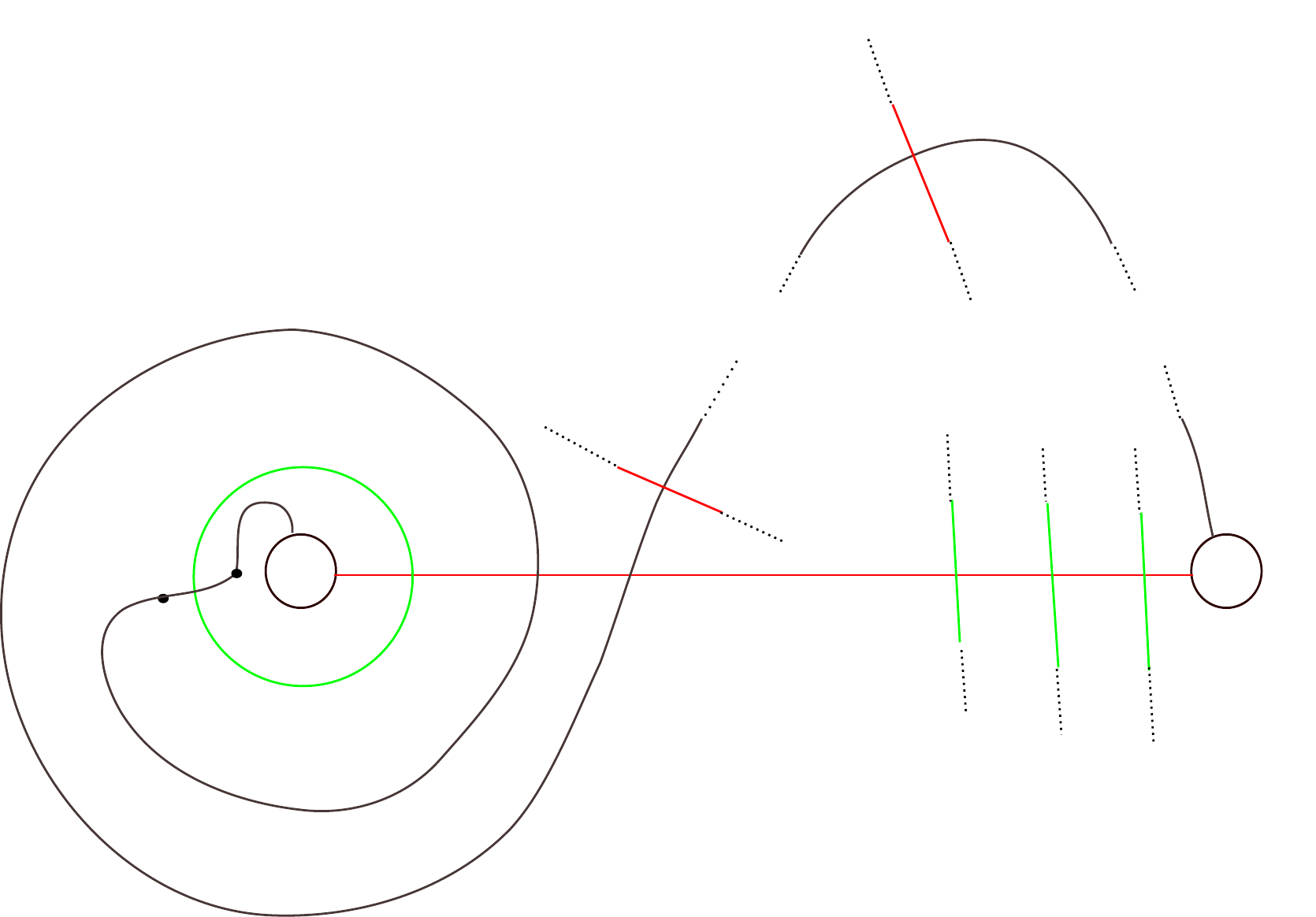}
\caption{A compatible Heegaard diagram $H$ for $K$. $\lambda$ is a $2$-framed longitude, and according to our assumption that the $0$-framed longitude can be chosen not to hit $\alpha_g$, $\lambda$ can be chosen to intersect $\alpha_g$ twice.}\label{longitude}
}
\end{figure}
\begin{figure}[!ht]
\centering{
\resizebox{125mm}{!}{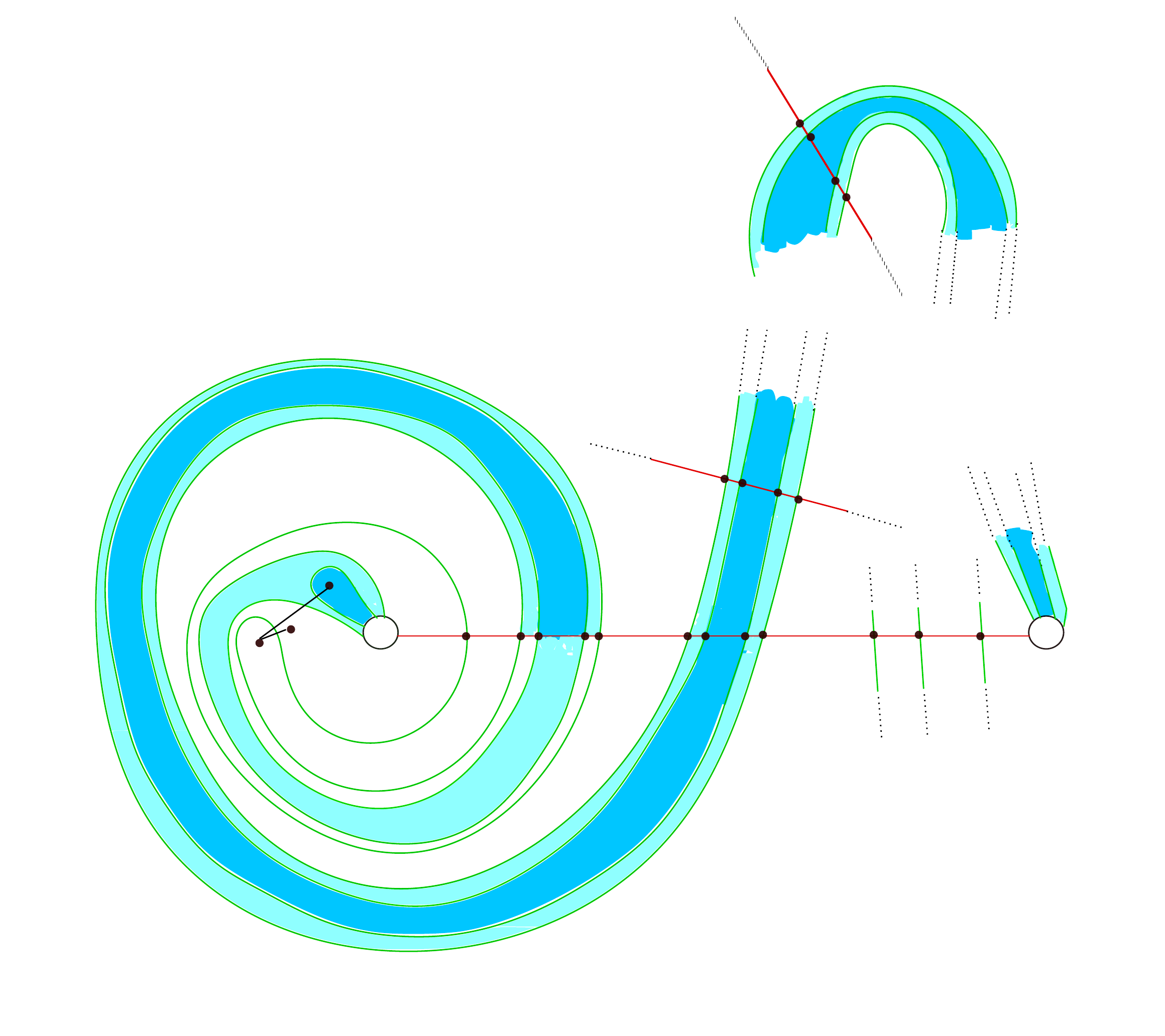}
\caption{A example of $H(p,n)$ with $n=2$ and $p=3$, corresponding to the Heegaard diagram shown in Figure~\ref{longitude}. There is an obvious arc of $\tilde{\beta}$ connecting $x_4$ and $y^{1}_0$, which neither intersects $\delta$ nor $\delta'$. By our convention, there is an arc of $\tilde{\beta}$ connecting $x_4$ and $y^{2}_0$ satisfying the same property as well, though it is not shown in the figure. The shaded region represents a domain connecting $\{x_1,\textbf{a}\}$ and $\{x_2,\textbf{a}\}$; the darkened color indicates the multiplicity is 2, while the lighter colored region has multiplicity 1. }\label{winding}
}
\end{figure}
We denote the Alexander grading by $A$ (by $A'$) when we use the base point $z$ (base point $z'$).
The comparison of Alexander filtrations is summarized in the following proposition.
\begin{prop}\label{comparison}
With the choice of Heegaard diagrams as described above and let $\textbf{x}$ be an $l$-intersection point, where $l\in\{0,1,...,p-1\}$, then
$$A'(\textbf{x})=pA(\textbf{x})+\frac{pn(p-1)}{2}+l.$$
\end{prop}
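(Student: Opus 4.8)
The plan is to reduce the absolute statement to a relative one via the standard relative Alexander grading formula, and then to carry out the relative computation inside the winding region. Recall that for any $\phi\in\pi_2(\textbf{x},\textbf{y})$ one has $A(\textbf{x})-A(\textbf{y})=n_z(\phi)-n_w(\phi)$ when the Alexander grading is measured with the base point $z$, and likewise $A'(\textbf{x})-A'(\textbf{y})=n_{z'}(\phi)-n_w(\phi)$ when it is measured with $z'$. Subtracting these two identities shows that the discrepancy between $A'$ and $A$ is governed entirely by $n_{z'}(\phi)-n_z(\phi)$, which depends only on how the boundary of $\phi$ interacts with the two arcs $\delta$ and $\delta'$. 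Thus it suffices to establish the relative identity $A'(\textbf{x})-A'(\textbf{y})=p\big(A(\textbf{x})-A(\textbf{y})\big)+\big(l(\textbf{x})-l(\textbf{y})\big)$ for all pairs of generators, and then to pin down the single additive constant, which the proposition asserts to be $\tfrac{pn(p-1)}{2}$.

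The geometric heart of the relative identity is the defining property of $\delta'$: it meets $\tilde{\beta}$ exactly $p$ times, whereas $\delta$ meets $\tilde{\beta}$ once. Because $\delta'$ threads through all $p$ strands of the winding region while $\delta$ crosses only the outermost one, each contribution of a domain to $n_z(\phi)-n_w(\phi)$ is accompanied, up to a correction, by $p$ times that contribution to $n_{z'}(\phi)-n_w(\phi)$; this is precisely the source of the factor $p$ multiplying $A$. To make this rigorous I would first treat the \emph{$x$-type} variations, connecting $\{x_i,\textbf{a}\}$ to $\{x_{i+1},\textbf{a}\}$ by the elementary winding-region domains (such as the shaded domain of Figure~\ref{winding}), reading off $n_w,n_z,n_{z'}$ layer by layer; summing these elementary contributions telescopes to $p\big(A(\textbf{x})-A(\textbf{y})\big)$ for generators lying in a common layer, while a variation terminating at an intermediate layer leaves an unbalanced crossing of $\delta'$ equal exactly to the layer index, producing the $+l$ term. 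I would then treat the \emph{$y$-type} generators on the curves $\alpha_i$, $i\neq g$, together with the bulk variations of the auxiliary tuple $\textbf{a}$ (resp. $\textbf{b}$), checking that for these the $p$-fold covering of $\delta$ by $\delta'$ is uninterrupted, so that $n_{z'}(\phi)-n_w(\phi)=p\big(n_z(\phi)-n_w(\phi)\big)$ and the relative identity persists with no change in $l$.

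It remains to fix the additive constant. Since the relative identity already forces $A'(\textbf{x})-pA(\textbf{x})-l(\textbf{x})$ to be independent of $\textbf{x}$, it is enough to evaluate both sides at a single convenient generator. I would choose a generator realizing the top Alexander grading for $K$, which the relation forces to be a $0$-intersection point, and invoke the classical Seifert-genus cabling formula $g(K_{p,pn+1})=pg(K)+\tfrac{pn(p-1)}{2}$ together with the fact that $\widehat{HFK}$ in top Alexander grading detects the genus; comparing the top $A$- and $A'$-gradings then yields the constant $\tfrac{pn(p-1)}{2}$. (Alternatively one can extract the constant from the symmetry of the Alexander gradings and the cabling formula $\Delta_{K_{p,pn+1}}(t)=\Delta_K(t^p)\,\Delta_{T_{p,pn+1}}(t)$.) The main obstacle is the relative computation of the second paragraph: carrying out the layer-by-layer bookkeeping of the multiplicities $n_w,n_z,n_{z'}$ uniformly across the $x$-type and $y$-type generators, and verifying in every case that the interruption of the $p$-fold covering of $\delta$ by $\delta'$ is recorded exactly by the layer index $l$. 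The normalization of the constant is comparatively routine once this is in place.
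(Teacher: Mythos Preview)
Your relative computation is essentially the paper's: the paper organizes it as three lemmas (3.3--3.5) which do exactly what you describe---local Whitney disks in the winding region give the odd/even comparison producing the $+l$ term, while $\epsilon$-classes whose $\tilde\beta$-arc avoids both $\delta$ and $\delta'$ give the $p$-to-$1$ scaling $A'-A'=p(A-A)$ for even generators, for the $y^{(k)}$-type generators, and for variations of the auxiliary tuple. So on the main step you and the paper agree.

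Where you diverge is the normalization, and here there is a genuine gap. The paper does not argue via genus detection; it simply imports Hedden's absolute computation $A'(\{x_0,\textbf{a}\})=pA(\{x_0,\textbf{a}\})+\tfrac{pn(p-1)}{2}$ (Lemma~2.5 of \cite{Hed09}), which is obtained from the spin$^c$/first-Chern-class formula for the absolute Alexander grading, not from genus. Your proposed route needs, in effect, that $\max_{\textbf{x}}\bigl(pA(\textbf{x})+l(\textbf{x})\bigr)=p\,g(K)$; equivalently, that every generator with $A(\textbf{x})=g(K)$ has $l(\textbf{x})=0$. But your own relative identities show that odd and even neighbors share the same $A$-grading (equations (3.1) and (3.3) in the paper), so ``the relation'' does not force a top-$A$ generator to be a $0$-intersection point. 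Without ruling out an odd generator at the top of the $A$-filtration, comparing $\max A'=g(K_{p,pn+1})$ with $\max A=g(K)$ only traps the constant in the window $\bigl[\tfrac{pn(p-1)}{2}-(p-1),\ \tfrac{pn(p-1)}{2}\bigr]$, which is not enough. The Alexander-polynomial alternative faces the same obstruction, since the Euler characteristic in the relevant grading may vanish. To close the gap you must either invoke Hedden's absolute computation directly, or supply a separate diagrammatic argument that in this particular wound diagram the top $A$-grading is realized only by $0$-intersection points.
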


Proposition \ref{comparison} can be viewed as a generalization of the comparison used in\cite{Hed05a,Hed09}, in which only $\{x_i,\textbf{a}\}$ for $i\leq n$ were shown to satisfy the above equation. In studying $\tau$, having just a comparison for $\{x_i,\textbf{a}\}$ for $i\leq n$ would suffice: first, Hedden observed that in the case when $|n|$ is sufficiently large they account for the top Alexander graded generators of $\widehat{CFK}(K_{p,pn+1})$ that determine $\tau(K_{p,pn+1})$; second, the behavior of $\tau$ for small $n$ can be deduced from the large-$n$ case by using crossing change inequality of $\tau$. In contrast, the lower Alexander graded elements of $CFK^\infty(K_{p,pn+1})$ may play a role in $\Upsilon$, even though they do not affect $\tau$. Therefore in the current paper we have to carry out a comparison for all types of generators. To accomplish this goal, we quote and extend some of the lemmas used in \cite{Hed05a,Hed09} below, after which Proposition \ref{comparison} will follow easily. 

\begin{lem}
When $1\leq j\leq (p-1)n$, we have
\begin{equation}
 A(\{x_{2j-1},\textbf{a}\})-A(\{x_{2j},\textbf{a}\})=0
\end{equation}
\begin{equation}
 A'(\{x_{2j-1},\textbf{a}\})-A'(\{x_{2j},\textbf{a}\})=p-\lceil\frac{j}{n}\rceil
\end{equation}

For an arbitrary $k$, when  $0\leq i\leq (p-2)$, we have
\begin{equation}
 A(\{y^{(k)}_{2i+1},\textbf{a}\})-A(\{y^{(k)}_{2i},\textbf{a}\})=0
\end{equation}
\begin{equation}
 A'(\{y^{(k)}_{2i+1},\textbf{a}\})-A'(\{y^{(k)}_{2i},\textbf{a}\})=p-(i+1) 
\end{equation}
\end{lem}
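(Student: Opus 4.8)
The plan is to compute each of the four relative Alexander gradings by exhibiting an explicit domain connecting the two generators and counting its multiplicities at the base points. Recall that for generators $\mathbf{x},\mathbf{y}$ lying in the same $\mathrm{Spin}^c$-structure and any domain $\phi$ connecting them,
$$A(\mathbf{x})-A(\mathbf{y})=n_z(\phi)-n_w(\phi),\qquad A'(\mathbf{x})-A'(\mathbf{y})=n_{z'}(\phi)-n_w(\phi),$$
since $A$ (resp. $A'$) is the Alexander filtration associated to the base point $z$ (resp. $z'$), both measured against the common base point $w$. In each of the four equations the two generators differ only in the single coordinate lying on $\tilde\beta$ (on $\alpha_g$ for the $x$'s, and on some $\alpha_i$ for the $y^{(k)}$'s), with the remaining $(g-1)$-tuple held fixed. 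Consequently one may take the connecting domain $\phi$ to be supported entirely in the winding region, so that all three multiplicities $n_w,n_z,n_{z'}$ are read off from the local picture.

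First I would construct these domains. For a pair $x_{2j-1},x_{2j}$ lying on a common layer of the spiral, the connecting domain is the thin strip bounded by an arc of $\tilde\beta$ and an arc of $\alpha_g$ running between the two intersection points --- exactly the type of region shaded in Figure~\ref{winding}, where the domain joining $\{x_1,\mathbf{a}\}$ and $\{x_2,\mathbf{a}\}$ is drawn together with its multiplicity-$1$ and multiplicity-$2$ subregions. The domains connecting the pairs $y^{(k)}_{2i+1},y^{(k)}_{2i}$ are the analogous strips with $\alpha_g$ replaced by $\alpha_i$. The point is that, because these generators are geometrically adjacent on a single layer, the $\alpha$-arc of $\partial\phi$ is short and the entire base-point count is controlled by the $\tilde\beta$-arc of $\partial\phi$.

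For the gradings $A$ taken against $z$, I would compute $n_z(\phi)-n_w(\phi)$ as the algebraic intersection number of $\partial\phi$ with the arc $\delta$ joining $w$ and $z$. Since $\tilde\beta$ is isotopic to the meridian $\mu$ rel $\{w,z\}$, the arc $\delta$ crosses $\tilde\beta$ exactly once, near the meridian and away from the winding region; as the $\tilde\beta$-arc of $\partial\phi$ runs between two adjacent points on one layer it avoids this crossing, so the intersection number vanishes. This yields equations (3.1) and (3.3) and reproduces Hedden's observation that same-layer generators share an $A$-grading. The content of the lemma therefore lies in the $A'$-gradings, equations (3.2) and (3.4).

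For these I would compute $n_{z'}(\phi)-n_w(\phi)$ as the algebraic intersection number of $\partial\phi$ with $\delta'$, which meets $\tilde\beta$ in exactly $p$ points by construction. Since only the $\tilde\beta$-part of $\partial\phi$ can cross $\delta'$, this number is determined by how many of the $p$ crossings are swept out as one traverses the $\tilde\beta$-arc of $\partial\phi$. The main obstacle --- and the part requiring genuine care --- is to match this crossing count to the layer index: the $x$-pairs falling in the $\ell$-th layer are exactly those with $\lceil j/n\rceil=\ell$, each of the $(p-1)$ windings contributing $n$ such pairs because of the $n$-framing, and a pair on layer $\ell$ sweeps out $p-\ell$ crossings, giving $p-\lceil j/n\rceil$; a $y^{(k)}$-pair on the $(i+1)$-st layer likewise sweeps out $p-(i+1)$ crossings. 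Establishing this correspondence rigorously amounts to a careful inspection of the spiral, tracking which portion of the tip region (where $z'$ sits) is enclosed by the domain on each layer, and is where the bookkeeping of \cite{Hed05a} must be reproduced and extended to the lower layers.
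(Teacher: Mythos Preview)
Your approach is essentially the paper's: exhibit the obvious bigon domain between the two generators (the one shaded in Figure~\ref{winding}) and read off $n_w$, $n_z$, $n_{z'}$ to get the relative gradings. One small caution on wording: the $\tilde\beta$-arc of $\partial\phi$ is not a short arc ``between two adjacent points on one layer''---as the paper puts it, it \emph{spirals into the winding region $p-\lceil j/n\rceil$ times and then makes a turn out}, which is why the domain acquires multiplicity-$2$ regions---but since this spiraling arc stays entirely in the winding region and hence avoids $\delta$, your conclusions for all four equations go through unchanged.
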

\begin{proof}
Note that there is a Whitney disk $\phi$ connecting $\{x_{2j-1},\textbf{a}\}$ to $\{x_{2j},\textbf{a}\}$ (See Figure~\ref{winding}).  It is the product of a constant map in $Sym^{g-1}(\Sigma)$ and the map represented by the disk which connects $x_{2j-1}$ and $x_{2j}$, with boundary consisting of a short arc of $\alpha_g$ and an arc of $\tilde{\beta}$ that spirals into the winding region $p-\lceil\frac{j}{n}\rceil$ times and then makes a turn out. We can see that $n_w(\phi)=n_z(\phi)=0$ and $n_{z'}(\phi)=p-\lceil\frac{j}{n}\rceil$. Therefore, $A(\{x_{2j-1},\textbf{a}\})-A(\{x_{2j},\textbf{a}\})=n_z(\phi)-n_w(\phi)=0$ and $A'(\{x_{2j-1},\textbf{a}\})-A'(\{x_{2j},\textbf{a}\})=n_{z'}(\phi)-n_w(\phi)=p-\lceil\frac{j}{n}\rceil$. We have obtained equation (3.1) and (3.2). The proof for (3.3) and (3.4) will follow a similar line, and hence is omitted. \\
\end{proof}
\begin{lem}
When $0\leq j\leq (p-1)n$, we have
\begin{equation}
 p(A(\{x_{0},\textbf{a}\})-A(\{x_{2j},\textbf{a}\}))=A'(\{x_{0},\textbf{a}\})-A'(\{x_{2j},\textbf{a}\})
\end{equation}

For an arbitrary $k$, when  $0\leq i\leq (p-2)$, we have
\begin{equation}
 p(A(\{y^{(k)}_{0},\textbf{b}\})-A(\{y^{(k)}_{2i},\textbf{b}\}))=A'(\{y^{(k)}_{0},\textbf{b}\})-A'(\{y^{(k)}_{2i},\textbf{b}\})
\end{equation}

\begin{equation}
 p(A(\{x_{2n},\textbf{a}\})-A(\{y^{(k)}_0,\textbf{b}\}))=A'(\{x_{2n},\textbf{a}\})-A'(\{y^{(k)}_0,\textbf{b}\}).
\end{equation}
\end{lem}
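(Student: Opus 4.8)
The plan is to reduce all three identities to a single statement about the local multiplicities of a connecting domain, and then to extract the factor of $p$ from the defining geometric property of $z'$. Recall that for any Whitney disk $\phi\in\pi_2(\textbf{x},\textbf{y})$ one has $A(\textbf{x})-A(\textbf{y})=n_z(\phi)-n_w(\phi)$ and $A'(\textbf{x})-A'(\textbf{y})=n_{z'}(\phi)-n_w(\phi)$; since the underlying three-manifold is $S^3$, the only periodic domains are multiples of the fundamental class $[\Sigma]$, so both differences are independent of the choice of $\phi$. Fixing one connecting domain for a given pair, each of (3.5), (3.6), (3.7) then becomes equivalent to the single numerical relation $n_{z'}(\phi)-n_w(\phi)=p\bigl(n_z(\phi)-n_w(\phi)\bigr)$. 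It thus suffices to exhibit, for each pair, one convenient domain and verify this relation.

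I would compute the two multiplicity differences homologically. Sliding the reference base point from $w$ to $z$ along $\delta$, and from $w$ to $z'$ along $\delta'$, gives $n_z(\phi)-n_w(\phi)=\delta\cdot\partial\phi$ and $n_{z'}(\phi)-n_w(\phi)=\delta'\cdot\partial\phi$, where $\partial\phi$ is regarded as a $1$-chain on the attaching curves. Because $\delta$ and $\delta'$ are disjoint from every $\alpha$-curve and from $\beta_1,\dots,\beta_{g-1}$, only the $\tilde\beta$-portion of $\partial\phi$ contributes, and we are reduced to comparing $\delta\cdot(\partial\phi|_{\tilde\beta})$ with $\delta'\cdot(\partial\phi|_{\tilde\beta})$. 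The source of the factor $p$ is exactly that $\delta$ meets $\tilde\beta$ once whereas $\delta'$ meets it in $p$ points, one on each strand threaded by the winding region: if the $\tilde\beta$-boundary arc traverses the winding region uniformly, it meets each of the $p$ strands that $\delta'$ sees exactly as often as it meets the single strand that $\delta$ sees, so that $\delta'\cdot(\partial\phi|_{\tilde\beta})=p\,\delta\cdot(\partial\phi|_{\tilde\beta})$.

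Concretely, for (3.5) and (3.6) I would take $\phi$ to be the region in the winding zone swept out between the two even intersection points (the $0$-intersection points) along $\alpha_g$, respectively along the relevant $\alpha_i$ --- the natural enlargement of the shaded domain of Figure~\ref{winding} --- arranged to avoid $w$, so that the relation collapses to $n_{z'}(\phi)=p\,n_z(\phi)$. For (3.7) I would use a domain whose $\tilde\beta$-boundary is the arc from $x_{2n}$ to $y^{(k)}_0$ singled out by our labelling convention, namely the one meeting neither $\delta$ nor $\delta'$; this forces the crossings with $\delta$ and $\delta'$ to be controlled in the same way and the same proportionality to hold.

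The main obstacle is precisely the bookkeeping in this final step. One must check that the connecting domains can be chosen so that their $\tilde\beta$-boundaries cross the strands of the $(p-1)$-fold winding with equal multiplicity, so that $\delta'\cdot\partial\phi$ is \emph{exactly} $p$ times $\delta\cdot\partial\phi$ and not merely proportional up to error terms coming from the entering and exiting strands of the spiral. Tracking these multiplicities through the winding region, in the same spirit as the computation of $n_{z'}(\phi)$ in the proof of (3.1)--(3.4), is where the genuine work lies; once it is done, Proposition~\ref{comparison} follows by assembling these even-to-even comparisons with equations (3.1)--(3.4).
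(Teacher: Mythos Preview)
Your framework is the paper's: both reduce the three identities to comparing $\delta\cdot\partial\phi$ with $\delta'\cdot\partial\phi$, noting that only the $\tilde\beta$-portion of the boundary contributes. The difference is in execution. The paper applies \emph{uniformly} the very trick you reserve for (3.7): for each equation it represents the $\epsilon$-class by a closed curve $\gamma$ whose $\tilde\beta$-arc misses both $\delta$ and $\delta'$, so that $\delta\cdot\gamma=\delta'\cdot\gamma=0$ outright. Writing $[\gamma]=\sum l_i\alpha_i+\sum k_i\beta_i$ in $H_1(\Sigma)$ and setting $c=\gamma-\sum l_i\alpha_i-\sum k_i\beta_i$ (which bounds), one then reads off $\delta'\cdot c=\delta'\cdot(-k_g\tilde\beta)=-k_gp$ and $\delta\cdot c=-k_g$ from the intersection numbers of $\delta,\delta'$ with the \emph{closed} curve $\tilde\beta$. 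The factor of $p$ thus comes for free, with no tracking of how a partial arc threads the $p$ strands.

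This eliminates exactly the ``bookkeeping'' obstacle you flag for (3.5)--(3.6). Your proposed direct computation there---choosing an explicit swept-out domain and checking that its $\tilde\beta$-boundary crosses all strands with equal multiplicity---would succeed, but it is strictly harder than the avoiding-arc argument you already sketched for (3.7). For (3.5), the relevant $\tilde\beta$-arc is the one that winds from $x_0$ counterclockwise $j$ times to $x_{2j}$ without leaving the winding region (Figure~3); for (3.6) it is the analogous arc between $y^{(k)}_0$ and $y^{(k)}_{2i}$ (Figure~4). Once you observe that these arcs also miss $\delta$ and $\delta'$, all three equations fall to the same two-line computation.
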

\begin{figure}[!ht]
\centering{
\resizebox{110mm}{!}{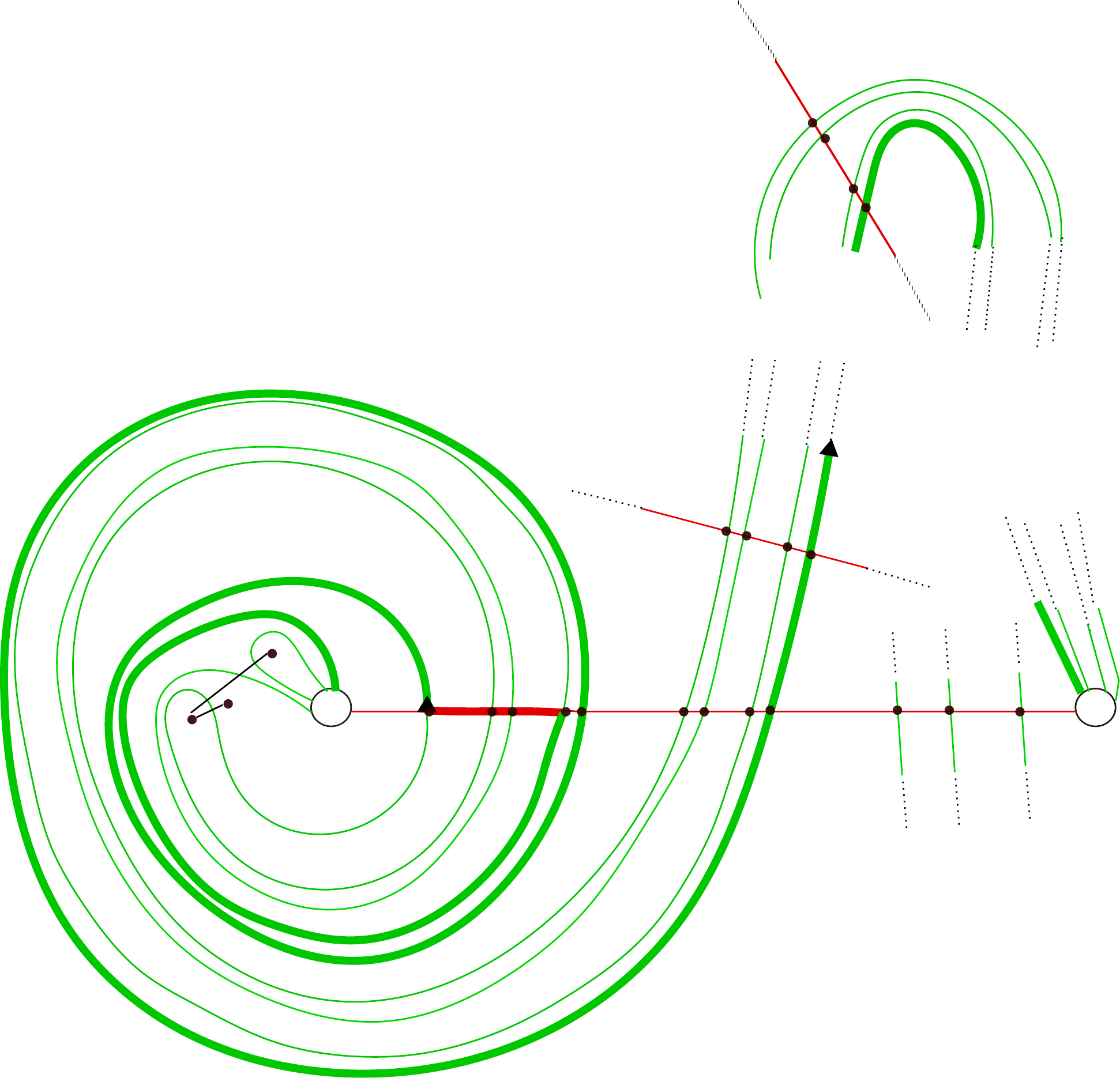}
\caption{The thickened curve $\gamma$ represents the $\epsilon$-class between $\{x_0,\textbf{a}\}$ and $\{x_6,\textbf{a}\}$. Note that the arc $\delta$ and $\delta'$ which connect based points do not intersect $\gamma$.}\label{epsilonclass1}
}
\end{figure}
\begin{figure}[!ht]
\centering{
\resizebox{110mm}{!}{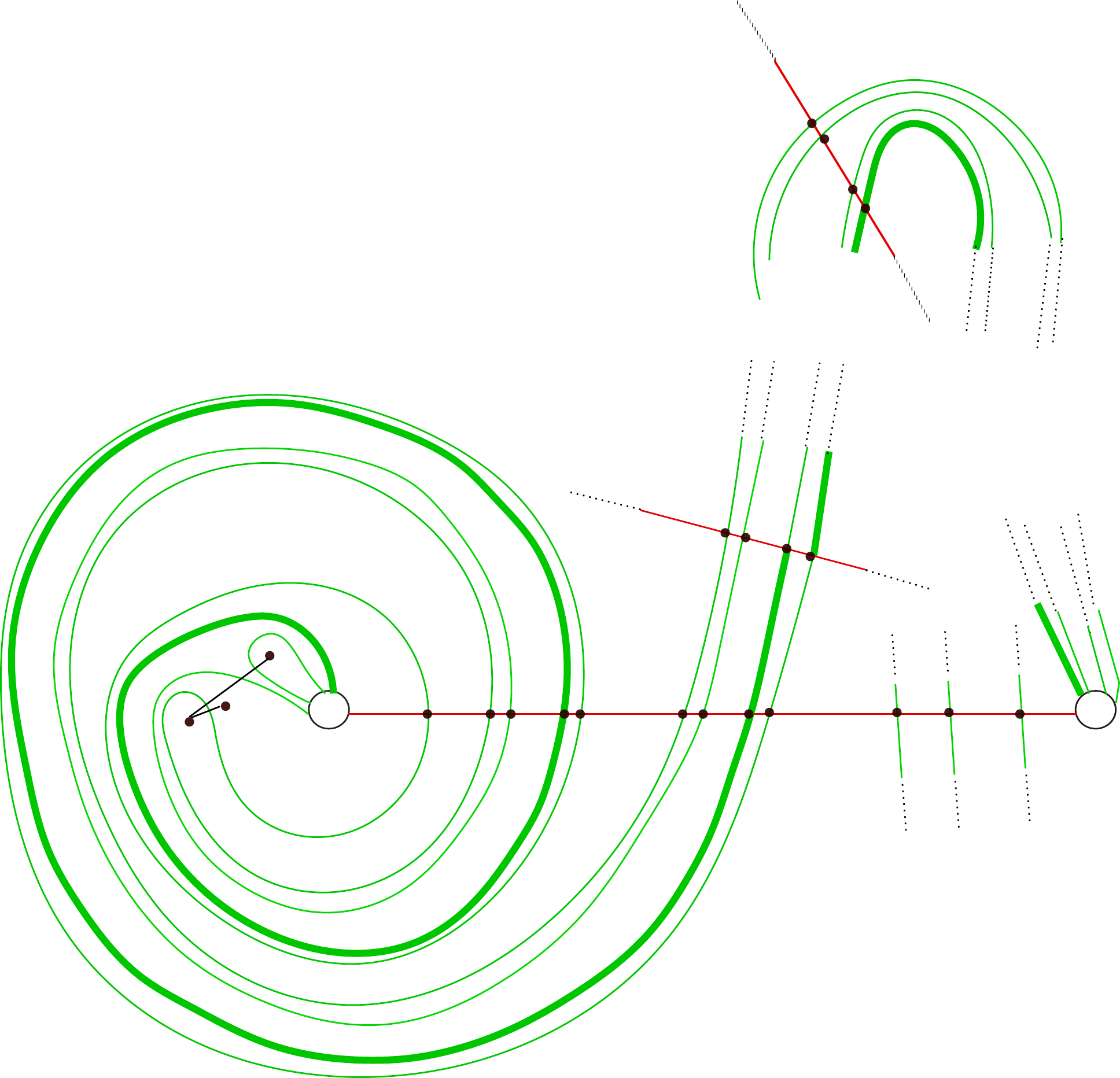}
\caption{The thickened curve is an arc on $\tilde{\beta}$ connecting $y^{(1)}_{0}$ and $y^{(1)}_{2}$ that does not intersect $\delta$ nor $\delta'$.}\label{epsilonclass2}
}
\end{figure}
\begin{proof}
First we prove Equation (3.5). Note that $\epsilon(\{x_{2j},\textbf{a}\},\{x_{0},\textbf{a}\})$ can be represented by a curve $\gamma$ on $\Sigma$, which is obtained by first connecting $x_{2j}$ to $x_0$ along $\alpha_g$, and then by an arc on $\tilde{\beta}$ which starts from $x_0$ and winds $j$ times counterclockwise to arrive at $x_{2j}$ (Figure~\ref{epsilonclass1}).  Note that $[\epsilon(\{x_{2j},\textbf{a}\},\{x_{0},\textbf{a}\})]=0\in H_1(S^3,\mathbb{Z})$, hence $[\gamma]=\Sigma l_i\alpha_i+\Sigma k_i \beta_i$, with $\beta_g$ is viewed as $\tilde{\beta}$. Let $c=\gamma-\Sigma l_i\alpha_i-\Sigma k_i \beta_i$, then $c$ bounds a domain on $\Sigma$. Note that since $\delta\cdot \gamma = \delta' \cdot \gamma=0$, we have $\delta'\cdot c=  \delta'\cdot (-k_g \tilde{\beta})= -k_g p = p (\delta\cdot (-k_g \tilde{\beta})) =p(\delta\cdot c)$, where ``$\cdot$'' stands for the intersection number. Equation (3.5) follows. \\
The proofs for the other two equations follow a similar line. Note the key point in the above argument is that the $\epsilon$-class of the two generators can be represented by a curve $\gamma$ whose arc on $\tilde{\beta}$ does not intersect the arc $\delta$ nor $\delta'$, implying $\delta\cdot \gamma = \delta' \cdot \gamma=0$. For Equation (3.6), note $y^{(k)}_{0}$ and $y^{(k)}_{2i}$ can be joined by an arc on $\tilde{\beta}$ satisfying the aforementioned property (see Figure~\ref{epsilonclass2} for an example). Recall by our convention, $y^{(k)}_0$ can be connected to $x_{2n}$ by an arc on $\tilde{\beta}$ which neither intersects $\delta$ nor $\delta'$, hence Equation (3.7) follows.
\end{proof}

Let $C(i)=\{(g-1)-tuples\textit{ } \textbf{a}|\textit{ }A(\{x_0,\textbf{a}\})=i\}$.
\begin{lem}(Lemma 3.4 of \cite{Hed05a})
Let $\textbf{a}_1\in C(j_1)$ and $\textbf{a}_2\in C(j_2)$, then
\begin{displaymath}
\begin{aligned}
 A(\{x_{i},\textbf{$\textbf{a}_1$}\})-A(\{x_{i},\textbf{$\textbf{a}_2$}\})=j_1-j_2\\
 A'(\{x_{i},\textbf{$\textbf{a}_1$}\})-A'(\{x_{i},\textbf{$\textbf{a}_2$}\})=p(j_1-j_2).
\end{aligned}
\end{displaymath}
\end{lem}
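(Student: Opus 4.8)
The plan is to reuse, almost verbatim, the $\epsilon$-class and intersection-number mechanism already established in the proof of the preceding lemma (Equations (3.5)--(3.7)), exploiting the single crucial feature that the two generators $\{x_i,\textbf{a}_1\}$ and $\{x_i,\textbf{a}_2\}$ share the \emph{same} $\tilde{\beta}$-component $x_i$ and differ only in their $Sym^{g-1}(\Sigma)$-components. First I would represent the relative homology class $\epsilon(\{x_i,\textbf{a}_1\},\{x_i,\textbf{a}_2\})$ by a curve $\gamma$ supported entirely on $\alpha_1,\dots,\alpha_{g-1}$ and $\beta_1,\dots,\beta_{g-1}$, that is, in the ``genus $g-1$ part'' of $\Sigma$ away from $\alpha_g$, $\tilde{\beta}$, and the winding region. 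This is possible precisely because the two generators agree at the $g$-th coordinate, so no arc of $\gamma$ need run along $\tilde{\beta}$. Consequently $\gamma$ misses the local cluster containing $w,z,z'$, and in particular $\delta\cdot\gamma=\delta'\cdot\gamma=0$, which is exactly the hypothesis under which the computation of the previous lemma runs.

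Next, following that computation, write $[\gamma]=\Sigma l_i\alpha_i+\Sigma k_i\beta_i$ with $\beta_g=\tilde{\beta}$, and let $c=\gamma-\Sigma l_i\alpha_i-\Sigma k_i\beta_i$, which bounds a domain on $\Sigma$. Since $\delta$ and $\delta'$ meet only $\tilde{\beta}$ among the attaching curves (with $\delta\cdot\tilde{\beta}=1$ and $\delta'\cdot\tilde{\beta}=p$) and miss $\gamma$, one gets $\delta\cdot c=-k_g$ and $\delta'\cdot c=-k_g p$. Translating through the fact that these two Alexander grading differences are computed by $\delta\cdot c$ and $\delta'\cdot c$ respectively (exactly as in the previous lemma) yields $A'(\{x_i,\textbf{a}_1\})-A'(\{x_i,\textbf{a}_2\})=p\big(A(\{x_i,\textbf{a}_1\})-A(\{x_i,\textbf{a}_2\})\big)$. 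This already reduces the lemma to its first displayed equation.

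For the first equation I would observe that the curve $\gamma$ constructed above does not involve $x_i$ at all, so literally the same $\gamma$, and hence the same chain $c$, works for every index $i$. Therefore $A(\{x_i,\textbf{a}_1\})-A(\{x_i,\textbf{a}_2\})$, being computed from the $i$-independent chain $c$, is independent of $i$; evaluating at $i=0$ and invoking the definition of $C(j_1)$ and $C(j_2)$ gives $A(\{x_0,\textbf{a}_1\})-A(\{x_0,\textbf{a}_2\})=j_1-j_2$, so the common value is $j_1-j_2$. Combining this with the ratio obtained in the previous paragraph proves both equalities simultaneously.

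The main obstacle is the first, geometric step: verifying that $\gamma$ can genuinely be chosen disjoint from $\delta$ and $\delta'$, i.e.\ that once the $\tilde{\beta}$-components coincide the winding region contributes nothing to the comparison. One must check that the arcs of $\gamma$ lying on $\alpha_{<g}$ and $\beta_{<g}$ can be kept off the cluster containing $w,z,z'$, and that $\delta,\delta'$ indeed meet only $\tilde{\beta}$ among the attaching curves. This is where the carefully arranged local picture of the diagram (Figures~\ref{winding}--\ref{epsilonclass2}) is essential, and it is the only place where the argument could fail; everything else is a direct transcription of the intersection-number bookkeeping already carried out for Equations (3.5)--(3.7).
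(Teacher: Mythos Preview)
Your argument is correct. Note, however, that the paper does not supply its own proof of this lemma: it is quoted verbatim as Lemma~3.4 of \cite{Hed05a} and left unproved. So there is nothing in the present paper to compare your proposal against.

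That said, your route is exactly the natural one and is in the spirit of Hedden's original argument: once the two generators share the $\tilde{\beta}$-component $x_i$, the $\epsilon$-class can be represented by a $1$-cycle supported on $\alpha_1,\dots,\alpha_{g-1},\beta_1,\dots,\beta_{g-1}$, which automatically misses the small disk containing $w,z,z'$ (by the standing hypothesis that $\mu$, hence the winding region, meets only $\alpha_g$). Your observation that this $\gamma$ is literally independent of $i$---so the grading difference is constant in $i$ and can be read off at $i=0$ from the definition of $C(j)$---is a clean way to get the first equation, and the $A'=pA$ ratio then follows from $\delta'\cdot\tilde{\beta}=p(\delta\cdot\tilde{\beta})$ exactly as in the proof of Equations~(3.5)--(3.7). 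The ``main obstacle'' you flag is not really an obstacle here: the assumptions on the diagram (that $\mu$ meets only $\alpha_g$ and the basepoints sit in the local winding picture) guarantee that $\delta$ and $\delta'$ meet only $\tilde{\beta}$ among the attaching curves, so the disjointness of $\gamma$ from $\delta,\delta'$ is automatic rather than something requiring further verification.
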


Now we are ready to prove Proposition 3.2.
\begin{proof}[Proof of Proposition 3.2.]
We want to prove that if $\textbf{x}$ an $l$-intersection point, where $l\in\{0,1,...,p-1\}$, then
$A'(\textbf{x})=pA(\textbf{x})+\frac{pn(p-1)}{2}+l$. As pointed out in Lemma 2.5 of \cite{Hed09}, $A'(\{x_0,\textbf{a}\})=pA(\{x_0,\textbf{a}\})+\frac{pn(p-1)}{2}$. Note that for any other intersection point $\textbf{u}$, as long as $A'(\{x_0,\textbf{a}\})-A'(\textbf{u})=p(A(\{x_0,\textbf{a}\})-A(\textbf{u}))$, then we have $A'(\textbf{u})=pA(\textbf{u})+\frac{pn(p-1)}{2}$ as well. Now the case when $l=0$ (even intersection points) follows easily from this obersvation, Lemma 3.4, and Lemma 3.5. The other cases is then an easy consequence of the $l=0$ case and Lemma 3.3.
\end{proof}

Let $C=CF^\infty(H(p,n,w))$ be the chain complex obtained by forgetting the Alexander filtraion. And let $(\mathcal{F}_{t})=\frac{t}{2} A+(1-\frac{t}{2}) Alg$, and $(\mathcal{F}_{t})'=\frac{t}{2} A'+(1-\frac{t}{2}) Alg$ be two grading functions on $C$ defined by using the two Alexander gradings $A$ and $A'$ corresponding to $z$ and $z'$ respectively. Then the filtrations corresponding to $(\mathcal{F}_{t})$ and $(\mathcal{F}_{t})'$ satisfy the following relation.

\begin{lem} For $p, n\in \mathbb{Z}$, $p>0$, and $0\leq t\leq \frac{2}{p}$, we have
$$(C, \mathcal{F}^{'}_{t})_{s+\frac{pn(p-1)t}{4}}\subset (C, \mathcal{F}_{pt})_s \subset (C, \mathcal{F}^{'}_{t})_{s+\frac{(pn+2)(p-1)t}{4}}.$$
\end{lem}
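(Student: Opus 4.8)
The plan is to reduce the entire statement to a single pointwise comparison of the two grading functions on the generators of $C$, and then read off both containments formally. Since $(C,\mathcal{F})_s$ is by definition $\mathcal{F}^{-1}(-\infty,s]$, an element lies in it precisely when every generator appearing in it has $\mathcal{F}$-value at most $s$. Hence to establish an inclusion of the form $(C,\mathcal{F}'_t)_{s+c}\subset(C,\mathcal{F}_{pt})_s$ it is enough to verify the corresponding implication on each generator, and this will follow from a uniform two-sided bound on the difference $\mathcal{F}'_t(\mathbf{y})-\mathcal{F}_{pt}(\mathbf{y})$ that is independent of the generator $\mathbf{y}$.

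First I would compute this difference. A generator of $C=CF^\infty(H(p,n,w))$ has the form $U^m\mathbf{x}$ for an intersection point $\mathbf{x}$ and $m\in\mathbb{Z}$, and $U$ lowers the algebraic grading Alg together with both Alexander gradings $A$ and $A'$ by one. Substituting $\mathrm{Alg}(U^m\mathbf{x})=-m$, $A(U^m\mathbf{x})=A(\mathbf{x})-m$, and $A'(U^m\mathbf{x})=A'(\mathbf{x})-m$ into the two grading functions, the $m$-terms collapse and one finds $\mathcal{F}_{pt}(U^m\mathbf{x})=\frac{pt}{2}A(\mathbf{x})-m$ and $\mathcal{F}'_t(U^m\mathbf{x})=\frac{t}{2}A'(\mathbf{x})-m$, so that the $U$-power cancels in the difference:
\[
\mathcal{F}'_t(U^m\mathbf{x})-\mathcal{F}_{pt}(U^m\mathbf{x})=\tfrac{t}{2}\bigl(A'(\mathbf{x})-pA(\mathbf{x})\bigr).
\]

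Next I would invoke Proposition \ref{comparison}: if $\mathbf{x}$ is an $l$-intersection point with $l\in\{0,\dots,p-1\}$, then $A'(\mathbf{x})-pA(\mathbf{x})=\frac{pn(p-1)}{2}+l$, so the difference equals $\frac{pn(p-1)t}{4}+\frac{lt}{2}$, a quantity depending only on the class $l$. Using $0\le l\le p-1$ together with $t\ge 0$ then yields the uniform bound
\[
\tfrac{pn(p-1)t}{4}\le \mathcal{F}'_t(\mathbf{y})-\mathcal{F}_{pt}(\mathbf{y})\le \tfrac{(pn+2)(p-1)t}{4}
\]
for every generator $\mathbf{y}$, the endpoints being realized at $l=0$ and $l=p-1$. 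The left inequality gives $\mathcal{F}_{pt}(\mathbf{y})\le\mathcal{F}'_t(\mathbf{y})-\frac{pn(p-1)t}{4}$ and hence $(C,\mathcal{F}'_t)_{s+\frac{pn(p-1)t}{4}}\subset(C,\mathcal{F}_{pt})_s$, while the right inequality gives $\mathcal{F}'_t(\mathbf{y})\le\mathcal{F}_{pt}(\mathbf{y})+\frac{(pn+2)(p-1)t}{4}$ and hence $(C,\mathcal{F}_{pt})_s\subset(C,\mathcal{F}'_t)_{s+\frac{(pn+2)(p-1)t}{4}}$. The hypothesis $t\le\frac{2}{p}$ enters only to ensure $\frac{pt}{2}\in[0,1]$, so that $\mathcal{F}_{pt}$ is a genuine convex combination of $A$ and Alg and the sublevel sets are honest subcomplexes.

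The only real obstacle is the bookkeeping in the second step: one must confirm that the difference of the two grading functions is genuinely independent of the $U$-power $m$ and of the choice of generator within a fixed intersection class, so that Proposition \ref{comparison} reduces it to a number depending only on $l$. This rests on the fact that $U$ shifts Alg, $A$, and $A'$ all by the same amount, forcing the cancellation displayed above; once that is in place, the two-sided estimate and the two containments are immediate.
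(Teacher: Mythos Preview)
Your proof is correct and follows essentially the same approach as the paper: both arguments use Proposition \ref{comparison} to bound $A'(\mathbf{x})-pA(\mathbf{x})$ by $\frac{pn(p-1)}{2}$ and $\frac{pn(p-1)}{2}+(p-1)$, and then translate this into the desired containments of sublevel sets. Your presentation is slightly more streamlined in that you isolate the single quantity $\mathcal{F}'_t(\mathbf{y})-\mathcal{F}_{pt}(\mathbf{y})=\tfrac{t}{2}(A'(\mathbf{x})-pA(\mathbf{x}))$ up front and bound it once, whereas the paper verifies each inclusion by a separate membership-implication computation; but the content is identical.
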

\begin{proof}
Let $\textbf{x}$ be a generator of $C$. Assume $U^{-k}\textbf{x}\in (C,\mathcal{F}_{pt})_s$, then $$ \frac{pt}{2}(A(\textbf{x})+k)+(1-\frac{pt}{2})k =\frac{pt}{2}A(\textbf{x})+k \leq s.$$
Combine the above inequality with Prop. 3.2, we have 
\begin{displaymath}
\begin{aligned}
&\frac{t}{2}(A'(\textbf{x})+k)+(1-\frac{t}{2})k\\
\leq &\frac{t}{2}(pA(\textbf{x})+\frac{pn(p-1)}{2}+p-1+k)+(1-\frac{t}{2})k\\
=&\frac{pt}{2}A(\textbf{x})+k+\frac{t}{2}\frac{pn(p-1)}{2}+\frac{t}{2}(p-1)\\
\leq &s+\frac{(pn+2)(p-1)t}{4}.
\end{aligned}
\end{displaymath}
Hence $U^{-k}\textbf{x}\in (C,\mathcal{F}^{'}_{t})_{s+\frac{(pn+2)(p-1)t}{4}}$, and therefore $$(C, \mathcal{F}_{pt})_s \subset (C, \mathcal{F}^{'}_{t})_{s+\frac{(pn+2)(p-1)t}{4}}.$$

Similarly, if we assume $U^{-k}\textbf{x}\notin (C,\mathcal{F}_{pt})_s$, then $$ \frac{pt}{2}(A(\textbf{x})+k)+(1-\frac{pt}{2})k > s.$$
Again, in view of the above inequality and Prop. 3.2, we have
\begin{displaymath}
\begin{aligned}
&\frac{t}{2}(A'(\textbf{x})+k)+(1-\frac{t}{2})k\\
\geq &\frac{t}{2}(pA(\textbf{x})+\frac{pn(p-1)}{2}+k)+(1-\frac{t}{2})k\\
=&\frac{pt}{2}A(\textbf{x})+k+\frac{t}{2}\frac{pn(p-1)}{2}\\
> &s+\frac{pn(p-1)t}{4}
\end{aligned}
\end{displaymath}
Hence $U^{-k}\textbf{x}\notin (C,\mathcal{F}^{'}_{t})_{s+\frac{pn(p-1)t}{4}}$, and therefore $$(C,\mathcal{F}^{'}_{t})_{s+\frac{pn(p-1)t}{4}}\subset (C,\mathcal{F}_{pt})_s.$$
\end{proof}

\begin{proof}[Proof of Theorem \ref{main} for $(p,pn+1)$-cable.] Recall that $$\nu(C,\mathcal{F}_t)=\min\{s|\ H_{0}((C, \mathcal{F}_t)_s)\rightarrow H_{0}(C)\ \text{is \ nontrivial} \},$$
and $\nu(C,{\mathcal{F}_t}^{'})$ is understood similarly. Now set $s=\nu(C,\mathcal{F}_{pt})$ in lemma 3.6, we have $$\nu(C,\mathcal{F}_{pt})+\frac{pn(p-1)t}{4} \leq\nu(C,{\mathcal{F}_t}^{'})\leq \nu(C,\mathcal{F}_{pt})+\frac{(pn+2)(p-1)t}{4}.$$
Recall $\Upsilon_K(pt)=-2\nu(C,\mathcal{F}_{pt})$ and $\Upsilon_{K_{p,pn+1}}(t)=-2\nu(C,\mathcal{F}'_{t})$, so by mutiplying $-2$ the above inequality translates to$$\Upsilon_K(pt)-\frac{(pn+2)(p-1)t}{2}\leq \Upsilon_{K_{p,pn+1}}(t)\leq \Upsilon_K(pt)-\frac{pn(p-1)t}{2}.$$
\end{proof}

\subsection{Upsilon of $(p,q)$-cable}
Denote the smooth knot concordance group by $\mathcal{C}$. Let $\theta : \mathcal{C} \rightarrow \mathbb{R}$ be a concordance homomorphism such that $|\theta(K)|\leq g_4(K)$ and $\theta(T_{p,q})=\frac{(p-1)(q-1)}{2}$ when $p,q>0$. In \cite{VC10}, Van Cott proved that if we fix a knot $K$ and $p>0$, and let $$h(l)=\theta(K_{p,l})-\frac{(p-1)l}{2},$$ then we have
\begin{equation}
-(p-1)\leq h(n)-h(r)\leq 0,
\end{equation} 
when $n>r$ such that both $n$ and $r$ relatively prime to $p$.
\begin{rmk}
The concordance homorphism studied by Van Cott has range $\mathbb{Z}$ rather than $\mathbb{R}$, but by checking the argument in \cite{VC10}, it is straightforward to see that this choice will not affect the inequality stated above. 
\end{rmk}
Now note that fixing $t\in (0, 2/p]$, $\frac{-\Upsilon_{K}(t)}{t}$ is a concordance homorphism which lower bounds the four-genus, and $\frac{-\Upsilon_{T_{p,q}}(t)}{t}=\frac{(p-1)(q-1)}{2}$ when $q>0$ (\cite{LVC15}). So we can take $\theta$ to be $\frac{-\Upsilon_{K}(t)}{t}$ and apply inequality (3.8), from which we get 
\begin{equation}
0\leq \bar{h}(n,t)-\bar{h}(r,t)\leq (p-1)t,
\end{equation}
where $\bar{h}(n,t)=\Upsilon_{K_{p,n}}(t)+\frac{(p-1)nt}{2}$. It is easy to see inequality (3.9) is true at $t=0$ as well, and hence it holds for $0\leq t\leq \frac{2}{p}$.

Following essentially the argument of Corollary 3 in \cite{VC10}, we conclude the proof of our main theorem as below:
\begin{proof}[Proof of Theorem \ref{main} for $(p,q)$-cable.]
Recall $0\leq t \leq \frac{2}{p}$. First we will show that $$\Upsilon_{K_{p,q}}(t)\geq \Upsilon_K(pt)-\frac{(p-1)(q+1)t}{2}.$$ Take $r$ to be any integer such that $q\geq pr+1$, then by inequality (3.9) we have $$\bar{h}(q,t)-\bar{h}(pr+1,t)\geq 0.$$ 
In view of the definition of $\bar{h}$, the above inequality translates to
\begin{equation}
\Upsilon_{K_{p,q}}(t) \geq \Upsilon_{K_{p,pr+1}}(t)-\frac{(p-1)(q-pr-1)t}{2}.
\end{equation} 
From the previous subsection, we have $$\Upsilon_{K_{p,pr+1}}(t)\geq \Upsilon_K(pt)-\frac{(pr+2)(p-1)t}{2}.$$
Combining this and inequality (3.10), we get $$\Upsilon_{K_{p,q}}(t)\geq \Upsilon_K(pt)-\frac{(p-1)(q+1)t}{2}.$$
The other half of the inequality follows from an analogous argument by considering $\bar{h}(pl+1, t)-\bar{h}(q,t)\geq 0,$ where $l$ is an integer such that $q\leq pl+1$. We omit the details.
\end{proof}

\section{Applications}
\subsection{Computation of $\Upsilon_{(T_{2,-3})_{2,2n+1}}(t)$}
In this subsection, we show how one can compute $\Upsilon_{(T_{2,-3})_{2,2n+1}}(t)$ by using our theorem together with $\widehat{HFK}((T_{2,-3})_{2,2n+1})$, for $n\geq 8$. Note none of these knots is an L-space knot. For easier illustration, we only give full procedure of the computation for the case $K=(T_{2,-3})_{2,17}$. The general case can be done in a similar way. 

By Proposition 4.1 of \cite{Hed05a}, for $i\geq 0$, we have
\begin{displaymath}\widehat{HFK}(K,i)\cong
\begin{cases}
\mathbb{F}_{(2)} &i=10\cr 
\mathbb{F}_{(1)} &i=9 \cr 
\mathbb{F}_{(1)}\oplus \mathbb{F}_{(0)} &i=8 \cr
\mathbb{F}_{(0)}\oplus \mathbb{F}_{(-1)} &i=7\cr
\mathbb{F}_{(i-8)} &0\leq i\leq 6\cr
0 & \text{otherwise}
\end{cases}
\end{displaymath}

Here the subindex stands for the Maslov grading. Note that by using the symmetry $\widehat{HFK}_d(K,i)=\widehat{HFK}_{d-2i}(K,-i)$ \cite{OS04c}, the above equation actually tells us the whole $\widehat{HFK}(K)$.
Now thinking $CFK^{\infty}(K)$ as $\widehat{HFK}(K)\otimes\mathbb{F}[U,U^{-1}]$ when regarded as an $\mathbb{F}[U,U^{-1}]$-module, we see the lattice points supporting generators with Maslov grading $0$ are $(0,7)$, $(7,0)$, $(i,8-i)$, where $-1\leq i \leq 9$. Here, for example, $(0,7)$ means the corresponding generator has algebraic grading $0$ and Alexander grading $7$.

Note by Theorem 1.2 of \cite{Hed09}, $\tau(K)=7$. In view of Theorem 13.1 in \cite{Liv14}, we see that for $t\in[0,\epsilon]$, $\Upsilon_K(t)=-2s$, where $s=\frac{t}{2}\cdot 7+(1-\frac{t}{2})\cdot 0=\frac{7t}{2}$ and $\epsilon$ is sufficiently small. In other words, when $t$ is small, the $\Upsilon_K$ is determined by the $\mathcal{F}_t$ grading of the generator at $(0,7)$. Now by Theorem 7.1 in \cite{Liv14}, singularities of $\Upsilon_K(t)$ can only occur at time $t$ when there is a line of slope $1-\frac{2}{t}$ that contains at least two lattice points supporting generators of Maslov grading $0$. The only $t\in(0,1)$ satisfying this property is $\frac{2}{3}$, giving a line of slope $-2$ that passes through the lattice points $(0,7)$ and $(-1,9)$.

So far, we can see that $\Upsilon_K(t)$ is either one of the two below, depending on whether $\frac{2}{3}$ is a singular point or not.
\begin{equation}\Upsilon_K(t)=
\begin{cases}
-7t, &t\in[0,\frac{2}{3}]\cr 
2-10t, &t\in[\frac{2}{3},1] 
\end{cases}
\end{equation}
Or
\begin{equation}
\Upsilon_K(t)=-7t, \qquad t\in[0,1].
\end{equation}
Note $T_{2,-3}$ is alternating, so we can apply Theorem 1.14 in \cite{OSS} to obtain $\Upsilon_{T_{2,-3}}(t)=1-|1-t|$, when $t\in [0,2]$. Applying Theorem \ref{main} we see that when $\frac{1}{2}\leq t\leq 1$, we have
\begin{displaymath}
2-11t\leq \Upsilon_K(t)\leq 2-10t.
\end{displaymath}
Now we see only (4.1) satisfies this constraint and hence $\Upsilon_K(t)$ is determined.
More generally, we have
\begin{prop}
For $n\geq 8$,
$$\Upsilon_{(T_{2,-3})_{2,2n+1}}(t)=
\begin{cases}
-(n-1)t, &t\in[0,\frac{2}{3}]\cr 
2-(n+2)t, &t\in[\frac{2}{3},1] 
\end{cases}$$
\end{prop}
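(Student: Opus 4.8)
The plan is to run, uniformly in $n$, the same computation that the text carries out for $K=(T_{2,-3})_{2,17}$. First I would record the stable form of $\widehat{HFK}((T_{2,-3})_{2,2n+1})$ supplied by Proposition 4.1 of \cite{Hed05a}: for $n\geq 8$ the nonzero groups in nonnegative Alexander grading are $\mathbb{F}_{(2)}$ at $i=n+2$, $\mathbb{F}_{(1)}$ at $i=n+1$, $\mathbb{F}_{(1)}\oplus\mathbb{F}_{(0)}$ at $i=n$, $\mathbb{F}_{(0)}\oplus\mathbb{F}_{(-1)}$ at $i=n-1$, and $\mathbb{F}_{(i-n)}$ for $0\leq i\leq n-2$. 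The hypothesis $n\geq 8$ is precisely what keeps the exceptional gradings $i=n-1,n,n+1,n+2$ disjoint from the linear range $0\leq i\leq n-2$, so that the complex has the same shape as in the displayed $n=8$ case, merely shifted up. Using $CFK^\infty\cong\widehat{HFK}\otimes\mathbb{F}[U,U^{-1}]$ together with the symmetry $\widehat{HFK}_d(K,i)=\widehat{HFK}_{d-2i}(K,-i)$, I would then list the lattice points supporting Maslov-grading-$0$ generators: the point $(0,n-1)$ and its mirror $(n-1,0)$, together with the full anti-diagonal $(i,n-i)$ for $-1\leq i\leq n+1$.

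Next I would pin down the small-$t$ behaviour and the location of singularities. Since $\tau((T_{2,-3})_{2,2n+1})=n-1$ by Theorem 1.2 of \cite{Hed09}, Theorem 13.1 of \cite{Liv14} gives $\Upsilon_K(t)=-(n-1)t$ for $t$ near $0$, governed by the generator at $(0,n-1)$. I would then apply Theorem 7.1 of \cite{Liv14}: a singularity of $\Upsilon_K$ at $t\in(0,1)$ requires a line of slope $1-\frac{2}{t}<-1$ through two of the Maslov-$0$ lattice points. Points on a common anti-diagonal lie on a line of slope $-1$, relevant only at $t=1$, so the only possible interaction is between $(0,n-1)$ and a point $(i,n-i)$ with $i\neq 0$; the resulting slope $\frac{1-i}{i}$ is $<-1$ only for $i=-1$, giving the single candidate $t=\frac23$ (the line through $(0,n-1)$ and $(-1,n+1)$), independent of $n$. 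Hence $\Upsilon_K$ is linear on $[0,\frac23]$ and on $[\frac23,1]$, and equals one of exactly two functions: either $-(n-1)t$ throughout $[0,1]$, or $-(n-1)t$ on $[0,\frac23]$ followed by $2-(n+2)t$ on $[\frac23,1]$, the latter slope being forced by the generator at $(-1,n+1)$, which is the unique minimizer of $\mathcal{F}_t$ along the anti-diagonal for $t<1$.

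Finally I would rule out the non-singular candidate. As $T_{2,-3}$ is alternating, Theorem 1.14 of \cite{OSS} gives $\Upsilon_{T_{2,-3}}(t)=1-|1-t|$, so $\Upsilon_{T_{2,-3}}(2t)=2-2t$ for $t\in[\frac12,1]$. Feeding $p=2$, $q=2n+1$ into Theorem \ref{main} yields, on $[\frac12,1]$, the upper bound $\Upsilon_K(t)\leq\Upsilon_{T_{2,-3}}(2t)-\frac{(p-1)(q-1)t}{2}=2-(n+2)t$. The non-singular candidate $-(n-1)t$ exceeds this bound exactly when $3t>2$, i.e. for every $t\in(\frac23,1]$, which is impossible; therefore the singular candidate holds, establishing the proposition.

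The main obstacle I anticipate is the uniform singularity analysis rather than any single computation: one must be certain that, as $n$ grows, no additional pair of Maslov-$0$ lattice points conspires to produce a second singularity in $(0,1)$, and that $(-1,n+1)$ (rather than any other anti-diagonal point) governs $[\frac23,1]$. Both reduce to the elementary slope estimate $\frac{1-i}{i}<-1\iff i=-1$ above, which is what makes the two-candidate dichotomy clean and uniform; the only genuinely $n$-dependent input is the shape of $\widehat{HFK}$, and it is here that the hypothesis $n\geq 8$ is doing its work.
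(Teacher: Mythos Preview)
Your proposal is correct and follows exactly the template the paper itself indicates (the paper's proof is literally ``Same as the discussion above''): you read off the Maslov-$0$ lattice points from the stable form of $\widehat{HFK}$, use Livingston's Theorems~7.1 and~13.1 to reduce to two candidate piecewise-linear functions with the unique possible interior singularity at $t=\tfrac{2}{3}$, and then invoke the upper bound of Theorem~\ref{main} on $[\tfrac12,1]$ to eliminate the non-singular candidate. One small point of exposition: when you enumerate pairs of Maslov-$0$ lattice points you only discuss those involving $(0,n-1)$, but there are also the pairs involving $(n-1,0)$ and a point $(i,n-i)$; by the symmetric slope computation $\frac{(n-i)-0}{i-(n-1)}=\frac{1-j}{j}$ with $j=i-(n-1)$, these again force $j=-1$ and hence $t=\tfrac{2}{3}$, so the conclusion is unchanged.
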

\begin{proof}
Same as the discussion above. We refer the reader to \cite{Hed05a} for the formula of $\widehat{HFK}((T_{2,-3})_{2,2n+1})$.
\end{proof}
\subsection{An infinite-rank summand of topologically slice knots}
Let $D$ denote the untwisted positive whitehead double of the trefoil knot. Fix an integer $p>2$ and let $J_n=((D_{p,1})...)_{p,1}$ denote the $n$-fold iterated $(p,1)$-cable of $D$ for some positive integer $n$. Recall Corollary \ref{cor} states that $J_n$ for $n=1,2,3,...$ are linearly independent in $\mathcal{C}$ and span an infinite-rank summand consisting of topologically slice knots. To prove this, we first establish two lemmas.
\begin{lem}
Let $\xi_n$ be the first singularity of $\Upsilon_{J_n}(t)$, then $\xi_n \in [\frac{1}{p^n},\frac{2}{1+p^n}]$. In particular, $\xi_i<\xi_j$ whenever $i>j$.  
\end{lem}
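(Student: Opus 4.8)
The plan is to locate the first singularity of $\Upsilon_{J_n}(t)$ by sandwiching $\Upsilon_{J_n}$ between two explicit piecewise-linear functions that share a common initial slope, using Theorem~\ref{main} iteratively, and then reading off where the upper and lower bounds are forced to deviate from that initial line. First I would record the starting data: since $D=Wh^+(T_{2,3})$ is topologically slice with $\tau(D)=1$, we know $\Upsilon_D$ has slope $-1$ near $t=0$, and in fact (by the computation of $\Upsilon_D$ in \cite{OSS}) $\Upsilon_D(t)=-t$ for $t$ in an initial interval with its first singularity at $t=1$. Because each iterate is a $(p,1)$-cable, the relevant arithmetic is that $q=1$ makes both bounds in Theorem~\ref{main} collapse: the $\frac{(p-1)(q\pm1)t}{2}$ terms become $\frac{(p-1)\cdot 0\cdot t}{2}=0$ on the lower side and $(p-1)t$ on the upper side, so applying the theorem with $K=D_{p,1}^{(n-1)}$ gives, for $0\le t\le \tfrac{2}{p}$,
\begin{equation}
\Upsilon_{J_{n-1}}(pt)-(p-1)t \;\le\; \Upsilon_{J_n}(t)\;\le\;\Upsilon_{J_{n-1}}(pt).
\end{equation}

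Next I would iterate this down to $J_0=D$. Unwinding the recursion, the upper bound becomes $\Upsilon_{J_n}(t)\le \Upsilon_D(p^n t)$, valid as long as the argument $p^n t$ stays inside the domain where the successive applications are legal, i.e. roughly while $p^n t$ is small. The key consequence is that $\Upsilon_{J_n}$ agrees with the straight line $-p^n t$ near the origin (both bounds force slope $-p^n$ at $t=0$, recovering $\tau(J_n)=p^n\tau(D)=p^n$ via property~(3)), and the upper bound $\Upsilon_{J_n}(t)\le \Upsilon_D(p^n t)$ first bends away from this line exactly when $p^n t$ reaches the first singularity of $\Upsilon_D$, namely at $t=\tfrac{1}{p^n}$. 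This yields the lower endpoint: the first singularity of $\Upsilon_{J_n}$ cannot occur before $t=\tfrac{1}{p^n}$, since on $[0,\tfrac{1}{p^n}]$ the upper bound is the pure line $-p^n t$ and the function is pinned to it.

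For the upper endpoint I would use the genus/symmetry structure together with the value $\Upsilon_{J_n}(t)\le \Upsilon_D(p^n t)$ to force a singularity no later than $t=\tfrac{2}{1+p^n}$. The natural tool is the symmetry $\Upsilon_{J_n}(t)=\Upsilon_{J_n}(2-t)$ combined with the fact that $\Upsilon_{J_n}$ is concave-to-the-eye piecewise linear with integer-slope behavior governed by lattice points of Maslov grading $0$ (as in Theorem 7.1 of \cite{Liv14}): a function that leaves the origin with slope $-p^n$ and is symmetric about $t=1$ cannot continue with slope $-p^n$ all the way to $t=1$, because that would violate the symmetry constraint at $t=1$ unless it turns; tracing the latest $t$ at which the line of slope $1-\tfrac{2}{t}$ can first pick up a second grading-$0$ lattice point gives $t=\tfrac{2}{1+p^n}$. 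The strict monotonicity $\xi_i<\xi_j$ for $i>j$ is then immediate, since both endpoints $\tfrac{1}{p^n}$ and $\tfrac{2}{1+p^n}$ are strictly decreasing in $n$ and the intervals for distinct $n$ are disjoint (one checks $\tfrac{2}{1+p^{j}}$ for the larger index sits below $\tfrac{1}{p^{i}}$... ), so the conclusion about ordering follows once the two endpoints are established.

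The main obstacle I expect is the upper endpoint $\tfrac{2}{1+p^n}$: the lower endpoint falls out cleanly from the collapsed Theorem~\ref{main} inequality, but pinning down that a singularity must occur \emph{by} $\tfrac{2}{1+p^n}$ requires more than the two-sided bound --- it needs the global symmetry of $\Upsilon$ and an argument that the initial linear piece of slope $-p^n$ cannot persist past this value. I would handle this by combining the symmetry property~(1) with the explicit endpoint $\Upsilon_{J_n}(1)$ (controlled via the upper bound evaluated against $\Upsilon_D(p^n)$ and the lattice-point description), checking that a single linear piece from $(0,0)$ of slope $-p^n$ extended to its forced turning point under symmetry meets the symmetric branch exactly at the abscissa $\tfrac{2}{1+p^n}$.
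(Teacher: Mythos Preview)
Your lower-bound argument has a gap: knowing only that $\Upsilon_{J_n}(t)\le -p^n t$ on $[0,1/p^n]$ does \emph{not} pin $\Upsilon_{J_n}$ to that line. Nothing you wrote rules out a singularity where the slope becomes \emph{more} negative, which would keep $\Upsilon_{J_n}$ below $-p^n t$ and be fully consistent with your iterated upper bound. (This can be repaired, but it requires extra input---e.g.\ $g_3(J_n)=p^n$ together with the slope bound $|\Upsilon'_K|\le g_4(K)$, or Livingston's fact that $\Upsilon_K(t)=-\tau(K)t$ for $t<1/g_3(K)$---none of which you invoke.) The paper takes the latter route: it computes $\tau(J_n)=g_3(J_n)=p^n$ directly and then cites the general fact that no singularity can occur before $1/g_3$.

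The more serious issue is the upper bound $\xi_n\le \frac{2}{1+p^n}$. Your symmetry argument does not produce this number: symmetry about $t=1$ is perfectly compatible with $\Upsilon_{J_n}(t)=-p^n t$ all the way to $t=1$ (and $p^n(t-2)$ on $[1,2]$), which would give first singularity at $t=1$, not $\frac{2}{1+p^n}$. Your lattice-point sketch likewise cannot force a singularity to \emph{occur} by a given time; it only constrains where singularities \emph{can} sit. The paper's key idea, which you are missing, is to use the \emph{lower} bound from Theorem~\ref{main} rather than the upper one: since $\Upsilon_D(t)=|1-t|-1$ equals $t-2$ on $[1,2]$, iterating the lower inequality $\Upsilon_{J_k}(t)\ge \Upsilon_{J_{k-1}}(pt)-(p-1)t$ gives $\Upsilon_{J_n}(t)\ge t-2$ on $[1/p^n,2/p^n]$. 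But the line $-p^n t$ drops strictly below $t-2$ as soon as $t>\frac{2}{1+p^n}$, so $\Upsilon_{J_n}$ cannot remain linear with slope $-p^n$ past that point. That intersection of the two lines is exactly where $\frac{2}{1+p^n}$ comes from.
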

\begin{proof}
We first deal with the lower bound. Recall for any knot $K$, $\Upsilon_K(t)=-\tau(K)t$ when $t<\frac{1}{g_3(K)}$ \cite{Liv14}. Note $\tau(D)=g_3(D)=1$ by \cite{Hed07} and hence $\tau(J_n)=p^n$  by \cite{Hed09}. This implies $g_3(J_n)=p^n$ since we have $p^n\leq g_4(J_n)\leq g_3(J_n)\leq p^n$. Therefore, $\xi_n\geq \frac{1}{p^n}$.\\
We move to establish the upper bound. Note $CFK^\infty(D)\cong CFK^\infty(T_{2,3})\oplus A$, where $A$ is an acyclic chain complex \cite{HKL16}. Therefore $\Upsilon_D(t)=\Upsilon_{T_{2,3}}(t)=|1-t|-1$. In particular, $\Upsilon_D(t)= t-2$ when $1\leq t\leq 2$. Apply Theorem \ref{main} we get $\Upsilon_{J_1}(t)\geq pt-2-(p-1)t=t-2$ when $\frac{1}{p}\leq t\leq \frac{2}{p}$. Inductively we have $\Upsilon_{J_n}(t)\geq t-2$ when $\frac{1}{p^n}\leq t\leq \frac{2}{p^n}$. Suppose $\xi_n>\frac{2}{1+p^n}$, then $\exists \epsilon>0$ such that $\Upsilon_{J_n}(\frac{2}{1+p^n}+\epsilon)=-p^n(\frac{2}{1+p^n}+\epsilon)=\frac{-2p^n}{1+p^n}-\epsilon p^n<\frac{2}{1+p^n}-2<\frac{2}{1+p^n}+\epsilon-2$, which is a contradiction. Therefore, $\xi_n\leq \frac{2}{1+p^n}$.
\end{proof}
Let $\Delta\Upsilon'_K(t_0)$ denote the slope change of $\Upsilon_K(t)$ at $t_0$, i.e. $\Delta\Upsilon'_K(t_0)=\lim_{t\searrow t_0}\Upsilon'_K(t_0)-\lim_{t\nearrow t_0}\Upsilon'_K(t_0)$. Recall in general $\frac{t_0}{2}\Delta\Upsilon'_K(t_0)$ is an integer \cite{OSS}. The following lemma shows in some cases, we can determine the value of $\frac{t_0}{2}\Delta\Upsilon'_K(t_0)$.
\begin{lem}
Let $K$ be a knot in $S^3$ such that $\tau(K)\geq 0$ and let $\xi$ be the first singularity of $\Upsilon_K(t)$. If $0<\xi<\frac{4}{g_3(K)+\tau(K)}$, then $\frac{\xi}{2}\Delta\Upsilon'_K(\xi)=1$.
\end{lem}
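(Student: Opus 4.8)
The plan is to work within Livingston's reformulation, where $-\tfrac{1}{2}\Upsilon_K(t)=\nu(C(K),\mathcal{F}_t)$ is computed as the minimal $\mathcal{F}_t$-value of a Maslov-grading-$0$ cycle generating $H_0(CFK^\infty(K))$. On each linear segment of $\Upsilon_K$ this minimum is realized by a single lattice point, and a singularity occurs exactly when the realizing point jumps; as recalled earlier, the old and new realizing points then lie on a common line of slope $1-\tfrac{2}{\xi}$ at $t=\xi$. First I would identify the realizing point just below $\xi$. Since $\Upsilon_K(t)=-\tau(K)t$ for $t<\tfrac{1}{g_3(K)}$ and no singularity occurs on $(0,\xi)$, this linear behavior persists on all of $[0,\xi]$, and the realizing point is $v_0=(0,\tau(K))$ (algebraic grading $0$, Alexander grading $\tau(K)$), whose $\mathcal{F}_t$-value is $\tfrac{t}{2}\tau(K)$.

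Next, write the new realizing point just above $\xi$ as $v_1=(i_0,j_0)$. Since both points lie on the critical line at $t=\xi$, they have equal $\mathcal{F}_\xi$-value, giving
$$\frac{\xi}{2}\tau(K)=\frac{\xi}{2}j_0+\Big(1-\frac{\xi}{2}\Big)i_0,\qquad\text{equivalently}\qquad \frac{\xi}{2}\big(\tau(K)-j_0\big)=\Big(1-\frac{\xi}{2}\Big)i_0.$$
The slope of $\Upsilon_K$ is $-\tau(K)$ below $\xi$ and $i_0-j_0$ above it, so $\Delta\Upsilon'_K(\xi)=i_0-j_0+\tau(K)$, and substituting the displayed relation yields
$$\frac{\xi}{2}\Delta\Upsilon'_K(\xi)=\frac{\xi}{2}i_0+\frac{\xi}{2}\big(\tau(K)-j_0\big)=\frac{\xi}{2}i_0+\Big(1-\frac{\xi}{2}\Big)i_0=i_0.$$
Thus the lemma reduces to showing $i_0=1$. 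That $i_0$ is a positive integer follows since it is the algebraic grading of a lattice point (consistent with the integrality of $\tfrac{t_0}{2}\Delta\Upsilon'$ from \cite{OSS}); positivity holds because for $v_1$ to strictly beat $v_0$ just above $\xi$ one needs $i_0>0$ (if $i_0\le 0$ the displayed relation forces $j_0\ge\tau(K)$, and then $v_1$ cannot overtake $v_0$). The same relation, with $i_0>0$ and $\xi<2$, also gives $j_0<\tau(K)$, so $i_0+\tau(K)-j_0>0$ and $\xi=\dfrac{2i_0}{\,i_0+\tau(K)-j_0\,}$.

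Finally I would invoke the genus bound: the underlying $\widehat{HFK}$-generator of $v_1$ sits in Alexander grading $j_0-i_0$, so $j_0-i_0\ge -g_3(K)$, i.e.\ $j_0\ge i_0-g_3(K)$. Using $\tau(K)\ge 0$ (and $g_3(K)\ge 1$, forced by the existence of a singularity) we have $g_3(K)+\tau(K)>0$, so if $i_0\ge 2$ then
$$i_0\big(g_3(K)+\tau(K)\big)\ge 2\big(g_3(K)+\tau(K)\big)\ge 2\big(i_0+\tau(K)-j_0\big),$$
where the last step uses $j_0\ge i_0-g_3(K)$. This rearranges to $\xi\ge\tfrac{4}{g_3(K)+\tau(K)}$, contradicting the hypothesis $\xi<\tfrac{4}{g_3(K)+\tau(K)}$. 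Hence $i_0=1$ and $\tfrac{\xi}{2}\Delta\Upsilon'_K(\xi)=1$. The algebra in the last two paragraphs is routine; the main obstacle is the first paragraph—pinning down, inside Livingston's model, that the first singularity is genuinely a transfer between the single point $v_0=(0,\tau(K))$ and a single new lattice point $v_1$, so that the clean slope identity $\tfrac{\xi}{2}\Delta\Upsilon'_K(\xi)=i_0$ is valid.
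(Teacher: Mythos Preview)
Your proof is correct and follows essentially the same route as the paper's: identify the pre-singularity realizing lattice point as $(0,\tau(K))$, write $\xi=\dfrac{2i'}{i'-j'+\tau(K)}$ for the new realizing point $(i',j')$, then combine the genus bound $|i'-j'|\le g_3(K)$ with the hypothesis $\xi<\tfrac{4}{g_3(K)+\tau(K)}$ to force $|i'|=1$, which equals $\tfrac{\xi}{2}\Delta\Upsilon'_K(\xi)$. The only difference is packaging: where you derive the slope identity $\tfrac{\xi}{2}\Delta\Upsilon'_K(\xi)=i_0$ and the identification $v_0=(0,\tau(K))$ by hand, the paper simply cites Livingston's Theorems~7.1 and~13.1, so the ``obstacle'' you flag is exactly what those citations cover.
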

\begin{proof}
Depicting the chain complex $CFK^\infty(K)$ as lattice points in the plane, by Theorem 7.1 (3) of \cite{Liv14}, we know there is a line of slope $1-\frac{2}{\xi}$ containing at least two lattice points $(i,j)$ and $(i',j')$ supporting generators of Maslov grading $0$. Since $\xi$ is the first singularity, by Theorem 13.1 of \cite{Liv14} we know, say, $(i,j)=(0,\tau(K))$. So we have $\frac{j'-\tau(K)}{i'}=1-\frac{2}{\xi}$, which implies $0<\xi=\frac{2i'}{i'-j'+\tau(K)}<\frac{4}{g_3(K)+\tau(K)}$. This together with the genus bound property of knot Floer homology $|i'-j'|\leq g_3(K)$ would constrain $|i'|=1$. By Theorem 7.1 (4) of \cite{Liv14}, $\frac{\xi}{2}\Delta\Upsilon'_K(\xi)=|i'|=1.$ 
\end{proof}
\begin{proof}[Proof of Corollary \ref{cor}]
Note all $J_n$ have trivial Alexander polynomial and hence are topologically slice \cite{Fre82}. The linear independence follows from Lemma 4.2: suppose $\Sigma k_iJ_{n_i}=0$ in $\mathcal{C}$ for some $k_i\neq 0$ and $n_1>...>n_l$, since $\Upsilon_{\Sigma k_iJ_{n_i}}(t)=\Sigma k_i\Upsilon_{J_{n_i}}(t)$ it possesses first singularity at $\xi_{n_1}$, which contradicts to $\Upsilon_{\Sigma k_iJ_{n_i}}(t)\equiv 0$. To see they span a summand, note by Lemma 4.2 and 4.3, $\frac{\xi_n}{2}\Delta\Upsilon'_{J_n}(\xi_n)=1$. Now one can easily see the homomorphism $\mathcal{C}\longrightarrow \mathbb{Z}^\infty$ given by $[K]\mapsto (\frac{\xi_n}{2}\Delta\Upsilon'_K(\xi_n))_{n=1}^\infty$ is an isomorphism when restricted to the subgroup spanned by $J_n$ and hence the conclusion follows.  
\end{proof}

\begin{rmk}
One can actually replace $D$ by any topologically slice knot $K$ with $\tau(K)=g(K)=1$ and even consider mixed iterated cable $((K_{p_1,1})...)_{p_n,1}$. We chose $J_n$ for the sake of an easier illustration.  The linear independence of certain subfamily of mixed iterated cables of $D$ was also observed by Feller, Park, and Ray \cite{FPR16}.
\end{rmk}

\end{document}